\tikzset{
    >=stealth,
    every picture/.style={thick},
    graphs/every graph/.style={empty nodes},
}
\tikzstyle{vertex}=[
\tikzstyle{printersafe}=[decoration={snake,amplitude=0pt}]
\newcommand{\vol}{\operatorname{vol}}
\renewcommand{\qq}{\mathbb{Q}}
\newtheorem{introthm}{Theorem}
\newtheorem{introconj}{Conjecture}
\newtheorem{introcor}{Corollary}
\newtheorem{theorem}{Theorem}[section]
\newtheorem{lemma}[theorem]{Lemma}
\newtheorem{proposition}[theorem]{Proposition}
\newtheorem{definition}[theorem]{Definition}
\newtheorem{example}[theorem]{Example}
\newtheorem{remark}[theorem]{Remark}
\theoremstyle{remark}
\numberwithin{equation}{section}
\begin{document}

\title[On minimal log discrepancies and Koll\'ar components]{On minimal log discrepancies and Koll\'ar components}

\author[J.~Moraga]{Joaqu\'in Moraga}
\address{
Department of Mathematics, University of Utah, 155 S 1400 E, JWB 321,
Salt Lake City, UT 84112, USA}
\email{moraga@math.utah.edu}

\subjclass[2010]{Primary 14E30, 
Secondary 14B05.}
\maketitle

\begin{abstract}
In this article we prove a local implication of boundedness of Fano varieties.
More precisely, we prove that $d$-dimensional $a$-log canonical singularities, with standard coefficients, 
which admit an $\epsilon$-plt blow-up have minimal log discrepancies belonging to a finite set which only depends on $d,a$ and $\epsilon$.
This result gives a natural geometric stratification of the possible mld's in a fixed dimension by finite sets.
As an application, we prove the ascending chain condition for minimal log discrepancies of exceptional singularities. 
We also introduce an invariant for klt singularities related to the total discrepancy of Koll\'ar components.
\end{abstract}

\tableofcontents

\section{Introduction}

The development of projective birational geometry has been deeply connected with the understanding of singularities~\cite{Xu17}.
In particular, global theories shed light on the singularities of the minimal model program~\cite{Kol13}.
However, singularities of dimension greater than three seem too complicated to have an explicit characterization~\cite{Kol11}.
Therefore, a more qualitative and intrinsic description of these singularities is desirable.

A common technique to study singularities using birational geometry is to apply certain monoidal transformation to the singularity
to extract an exceptional projective divisor over it, and then try to deduce some local information of the singularity
from the global information of the exceptional divisor. 
This approach has been successful in many cases: The study of dual complexes of singularities~\cite{KX16,dFKX17},
the finiteness of the algebraic fundamental group of a klt singularity~\cite{Xu14},
the ascending chain condition for log canonical thresholds~\cite{dFEM10,HMX14},
the study of the normalized volume function on klt singularities~\cite{Li17,LX16,LX17},
and the theory of log canonical complements~\cite{PS01,PS09,Bir16a}, among others.

In this article, we use this approach to study the minimal log discrepancies of klt singularities~\cite{Amb99,Sho04}.
We aim to explain how a bound on the singularities of a plt blow-up implies finiteness of the possible minimal log discrepancies.
More precisely, we say that a plt blow-up $\pi \colon Y\rightarrow X$ at a point $x$ of a klt pair $(X,\Delta)$ is $\epsilon$-plt
if the log discrepancies of the corresponding plt pair $(Y,\Delta_Y+E)$ are either zero or greater than $\epsilon$ (see Definition~\ref{def:e-plt blow-up}).
We say that the point $x$ of the klt pair $(X,\Delta)$ admits an $\epsilon$-plt blow-up if there exists $\pi$ as above.
For $\epsilon$ a positive real number, adjunction~\cite[Theorem 0.1]{Hac14} and the boundedness of Fano varieties~\cite[Theorem 1.1]{Bir16b}, allows us to conclude that the normal projective varieties $E$ belong to a bounded family.
By~\cite[Lemma 1]{Xu14}, we know that every klt singularity admits a plt blow-up and a simple argument using resolution of singularities 
proves that every klt singularity admits an $\epsilon$-plt blow-up for some positive real number $\epsilon$ (see Lemma~\ref{e-plt blow-up}).

Using the above notation we can introduce the set of minimal log discrepancies 
of $a$-log canonical pairs admitting an $\epsilon$-plt blow-up:
\[
\mathcal{M}(d,\mathcal{R})_{a,\epsilon}:=
\left\{ 
{\rm mld}_x(X,\Delta)
\,\middle\vert\, 
\begin{tabular}{c}   
$\dim(X)=d, {\rm coeff}(\Delta) \in \mathcal{R},$ and $(X,\Delta)$ is a $a$-lc pair \\
 which admits an $\epsilon$-plt blow-up at $x$
\end{tabular}
\right\}.
\]
We recall the conjecture known as the ascending condition for minimal log discrepancies:

\begin{introconj}\label{accmld}
Let $d$ be a positive integer and $\mathcal{R}$ be a set of real numbers satisfying the descending chain condition.
Then the set $\mathcal{M}(d,\mathcal{R})_{0,0}$ satisfies the ascending chain condition.
\end{introconj}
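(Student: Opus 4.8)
The plan is to derive the conjecture from the finiteness of the strata $\mathcal{M}(d,\mathcal{R})_{a,\epsilon}$ for $a,\epsilon>0$, which is the main theorem of this article; the point is to show that a putative counterexample can be confined to a single such stratum. Suppose, for contradiction, that $m_1<m_2<\cdots$ is a strictly increasing sequence in $\mathcal{M}(d,\mathcal{R})_{0,0}$. In dimension $d$ every minimal log discrepancy is at most $d$, so the sequence converges to some $m\le d$, and after discarding finitely many terms we may assume $m_i\ge m_1>0$. Write $m_i=\mathrm{mld}_{x_i}(X_i,\Delta_i)$ for a point $x_i$ of a $d$-dimensional lc pair with $\mathrm{coeff}(\Delta_i)\in\mathcal{R}$; since $m_i>0$ the pair is klt in a neighborhood of $x_i$, and we replace $X_i$ by such a neighborhood. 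A preliminary reduction, using the structure of DCC sets together with the perturbation and global ACC techniques of Hacon--McKernan--Xu, lets us assume the coefficients lie in a fixed finite set, so that the main theorem is applicable.

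The first step is to pin down a uniform value of $a$. Because $\mathrm{mld}_{x_i}(X_i,\Delta_i)=m_i\ge m_1$, each local model is $a$-log canonical at $x_i$ with $a:=m_1>0$; here one uses the lower semicontinuity of the minimal log discrepancy to pass from the bound at the closed point $x_i$ to a bound on a whole neighborhood, and the DCC hypothesis on $\mathcal{R}$ to control the coefficients of the components of $\Delta_i$ through $x_i$. Thus the entire sequence lies in $\bigcup_{\epsilon>0}\mathcal{M}(d,\mathcal{R})_{a,\epsilon}$ for this fixed $a>0$.

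The crux is to secure a uniform lower bound for $\epsilon$. By Lemma~\ref{e-plt blow-up} each $(X_i,\Delta_i)$ admits an $\epsilon_i$-plt blow-up for some $\epsilon_i>0$, but that lemma produces $\epsilon_i$ from an arbitrary resolution, with no control, so a priori $\epsilon_i\to 0$; and a nested union of finite sets $\bigcup_{\epsilon>0}\mathcal{M}(d,\mathcal{R})_{a,\epsilon}$ need not satisfy the ascending chain condition, so this degeneration must be excluded. The plan is to replace the uncontrolled plt blow-up by the canonical Kollár component attached to the minimizer of the normalized volume $\widehat{\vol}(x_i,X_i,\Delta_i)$: by the existence and quasi-monomiality of the minimizer and the solution of the Stable Degeneration Conjecture, this minimizer induces a Kollár component $E_i$ whose log Fano structure $(E_i,\mathrm{Diff}_{E_i})$ degenerates to a K-semistable Fano cone. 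I would then bound the plt-ness of the induced blow-up — equivalently, the log discrepancies of the different $\mathrm{Diff}_{E_i}$ — from below by a constant $\epsilon=\epsilon(a,d)>0$, combining the lower bound $\mathrm{mld}_{x_i}\ge a$ with the boundedness of K-semistable log Fano cones and Birkar's solution of the BAB conjecture. With such a uniform $\epsilon$ in hand, every $m_i$ would lie in the single finite set $\mathcal{M}(d,\mathcal{R})_{a,\epsilon}$, contradicting the strict monotonicity of the sequence and thereby establishing the conjecture.

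The genuinely hard point is the transfer of the pointwise lower bound on the minimal log discrepancy into a uniform lower bound on the singularities of the canonical Kollár component: the general plt blow-up carries no numerical control, so one is forced onto the volume-minimizing component and must argue that its different cannot acquire log discrepancies arbitrarily close to zero as the singularities vary in the family. This is precisely where exceptionality simplifies matters — for an exceptional singularity the Kollár component is unique and automatically $\epsilon$-controlled, which is why the article can prove the ascending chain condition outright in that case; removing the exceptionality hypothesis, i.e.\ obtaining the uniform $\epsilon$ for arbitrary klt singularities, is the remaining obstacle to the full conjecture.
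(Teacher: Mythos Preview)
This statement is Conjecture~\ref{accmld}, the ascending chain condition for minimal log discrepancies; the paper states it as an \emph{open conjecture} and does not prove it. There is therefore no proof in the paper to compare against. What the paper does establish is the much weaker Corollary~\ref{acc and e-plt blow-ups} (ACC for $\mathcal{M}(d,\mathcal{S})_{0,\epsilon}$ with $\epsilon>0$ fixed) and Corollary~\ref{acc exceptional singularities} (ACC for exceptional singularities), and your final paragraph correctly identifies why these stop short of the full conjecture.

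Your proposal is not a proof but a strategy, and its central step is known to fail. The key claim is that for the volume-minimizing Koll\'ar component one can bound the $\epsilon$-plt-ness from below by some $\epsilon=\epsilon(a,d)>0$. The paper's own second example in Section~\ref{sec: examples} rules this out: Hayakawa's terminal threefold singularities $X_m$ are all $1$-log canonical (indeed terminal, with $\Delta=0$), yet \emph{every} plt blow-up of $X_m$---including any canonical or volume-minimizing one---is at most $(2/m)$-plt. Hence no uniform $\epsilon$ depending only on $a$ and $d$ exists, regardless of which Koll\'ar component is selected, and the appeal to K-semistability and the Stable Degeneration Conjecture cannot circumvent an obstruction to the existence of the bound itself. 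You concede this in your last sentence, which means the proposal is a heuristic outline of why the conjecture is hard rather than a proof of it.

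Two secondary issues. First, Theorem~\ref{finiteness e-plt blow-up} is proved only for standard coefficients $\mathcal{S}$, not for an arbitrary DCC set $\mathcal{R}$; your ``preliminary reduction'' from $\mathcal{R}$ to a finite set is therefore doing real work that you have not supplied. Second, the invocation of lower semicontinuity of the minimal log discrepancy is both unnecessary---the proof of Theorem~\ref{finiteness e-plt blow-up} only uses $a_E(X,\Delta)+1\geq a$ for the extracted divisor $E$ with $c_X(E)=x$, which follows from $\mathrm{mld}_x\geq a$ alone---and itself a well-known open conjecture in general.
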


The ascending chain condition for minimal log discrepancies is known 
for surface singularities~\cite{Ale93}, for certain terminal threefold singularities~\cite[Lemma 4.4.1]{Sho96}, and for toric singularities~\cite{Bor97,Amb06}.
However, in higher dimensions there is not much that we can say about the possible mld's in a fixed dimension.
In this paper, we give a first step towards the understanding of higher dimensional minimal log discrepancies.
The following theorem can be understood as a natural geometric stratification of the possible mld's of a fixed dimension by finite sets:

\begin{introthm}\label{finiteness e-plt blow-up}
Let $d$ be a positive integer and let $a$ and $\delta$ be positive real numbers, and $\mathcal{S}$ the set of standard rational numbers.
Then the set $\mathcal{M}(d,\mathcal{S})_{a,\epsilon}$ is finite.
\end{introthm}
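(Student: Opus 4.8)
The plan is as follows; the argument extracts local information about $x$ from the global geometry of the Koll\'ar component produced by the $\epsilon$-plt blow-up, and shows that $\operatorname{mld}_x(X,\Delta)$ is forced into an explicit finite set $\Sigma(d,a,\epsilon)$.

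\smallskip
\emph{Step 1: reduce to finitely many coefficients, and record the easy bounds.} Fix $x\in(X,\Delta)$ with $\dim X=d$, $\operatorname{coeff}(\Delta)\subseteq\mathcal{S}$, $(X,\Delta)$ an $a$-lc pair, and an $\epsilon$-plt blow-up $\pi\colon Y\to X$ extracting a prime divisor $E$ with $\pi(E)=\{x\}$; put $\Delta_Y=\pi_*^{-1}\Delta$ and $a_E=a_E(X,\Delta)$. Since $(X,\Delta)$ is $a$-lc, a component of $\Delta$ with coefficient $1-\tfrac1n$ has log discrepancy $\tfrac1n\ge a$, so $\operatorname{coeff}(\Delta)$ lies in the finite set $\mathcal S_a=\{1-\tfrac1n:\ \tfrac1n\ge a\}$; in particular these coefficients have denominators dividing a fixed integer $s=s(a)$. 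Moreover $a\le\operatorname{mld}_x(X,\Delta)\le\min\{a_E,\,d\}$, the bound $d$ being the standard estimate for the mld at a closed point, and $\operatorname{mld}_x(X,\Delta)\le a_E$ because $E$ is a divisor over $x$.

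\smallskip
\emph{Step 2: adjunction and boundedness of the Koll\'ar component.} By adjunction \cite{Hac14}, $(K_Y+\Delta_Y+E)|_E=K_E+\Delta_E$ with $\Delta_E=\operatorname{Diff}_E(\Delta_Y)\ge 0$, $(E,\Delta_E)$ klt of dimension $d-1$, and $\operatorname{coeff}(\Delta_E)$ contained in the hyperstandard set attached to $\mathcal S_a$. Writing $N=-E|_E$, the facts that $-E$ is $\pi$-ample and $\pi^*(K_X+\Delta)|_E\equiv 0$ give $-(K_E+\Delta_E)=a_EN$ with $N$ ample, so $(E,\Delta_E)$ is log Fano. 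The $\epsilon$-plt hypothesis, via adjunction applied to the plt pair $(Y,\Delta_Y+E)$, forces $(E,\Delta_E)$ to be $\epsilon$-lc; hence each coefficient of $\Delta_E$ is $\le 1-\epsilon$, and a direct computation shows that the part of the hyperstandard set of $\mathcal S_a$ lying in $[0,1-\epsilon]$ is finite. Thus $(E,\Delta_E)$ is an $\epsilon$-lc log Fano pair of dimension $d-1$ with coefficients in a fixed finite set, so by boundedness of Fano varieties \cite{Bir16b} the pairs $(E,\Delta_E)$ form a bounded family. From this I extract three uniform consequences depending only on $d,a,\epsilon$: an integer $r_0$ with $r_0(K_E+\Delta_E)$ and $r_0(K_Y+\Delta_Y+E)$ (near $E$) Cartier; a bound $\vol\bigl(-(K_E+\Delta_E)\bigr)\le V$; and an integer $N_0$ bounding the Cartier index along $E$ of the Weil divisor $E$ on $Y$. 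The last point is the delicate one: it amounts to boundedness of the formal neighbourhood of $(Y,\Delta_Y+E)$ along $E$, equivalently a uniform bound on the orders of the local fundamental groups of $(Y,\Delta_Y+E)$ at points of $E$, and it relies on boundedness of $(E,\Delta_E)$ together with boundedness of complements and of local fundamental groups for the bounded family in play \cite{Bir16a,Bir16b}, once the polarization $N$ is controlled as in Step 3.

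\smallskip
\emph{Step 3: the mld is governed by $E$ and takes finitely many values.} Since $\pi^{-1}(x)=E$, a divisor $F$ over $X$ has $c_X(F)=\{x\}$ exactly when, viewed over $Y$, it satisfies $c_Y(F)\subseteq E$; the crepant identity $\pi^*(K_X+\Delta)=K_Y+\Delta_Y+(1-a_E)E$ and linearity of log discrepancies give
\[
\operatorname{mld}_x(X,\Delta)=\inf\bigl\{\, a_F(Y,\Delta_Y+E)+a_E\,\ord_F(E)\ :\ F\ \text{a divisor over}\ Y,\ c_Y(F)\subseteq E\,\bigr\},
\]
the value $a_E$ being realized by $F=E$. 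First, $a_E$ itself takes finitely many values: $N_0N$ is an ample Cartier divisor on the projective $(d-1)$-fold $E$, so $(N_0N)^{d-1}$ is a positive integer, and from $\vol\bigl(-(K_E+\Delta_E)\bigr)=a_E^{d-1}\vol(N)$ together with $a_E\ge a$ one gets $(N_0N)^{d-1}\le N_0^{d-1}V/a^{d-1}$ and $a\le a_E\le\bigl(VN_0^{d-1}\bigr)^{1/(d-1)}$; since $r_0(-(K_E+\Delta_E))$ and $N_0N$ are proportional ample Cartier divisors, $a_E$ has denominator dividing $r_0(N_0N)^{d-1}$, leaving finitely many possibilities. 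Now let $F^*$ attain the infimum and set $b=a_{F^*}(Y,\Delta_Y+E)\ge 0$, $t=\ord_{F^*}(E)>0$, so $\operatorname{mld}_x(X,\Delta)=b+a_Et$. As $b\le\operatorname{mld}_x(X,\Delta)\le d$ and $b$ is a log discrepancy of $(Y,\Delta_Y+E)$ at a point of $E$, the index bound gives $b\in\tfrac1{r_0}\zz\cap(0,d]$, a finite set. As $a_E\ge a$, $b\ge 0$ and $b+a_Et\le d$, we get $t\le d/a$, and since $N_0E$ is Cartier near $c_Y(F^*)$ we have $t\in\tfrac1{N_0}\zz_{>0}$; hence $t\in\tfrac1{N_0}\zz\cap(0,d/a]$, again finite. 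Therefore $\operatorname{mld}_x(X,\Delta)=b+a_Et$ lies in the finite set $\Sigma(d,a,\epsilon)$ of all such sums, which proves the theorem.

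\smallskip
\emph{Main obstacle.} The genuinely hard input is the uniform index bound $N_0$ in Step 2 — passing from boundedness of the log Fano pair $(E,\Delta_E)$ to boundedness of the germ of $(Y,\Delta_Y+E)$ along $E$. Once $a_E$ is known to be finite-valued the polarization $N=-E|_E$ is pinned down by $-(K_E+\Delta_E)=a_EN$, but one must still control the thickening of $E$ inside $Y$; this rests on the full force of \cite{Bir16b} together with boundedness of complements and finiteness of local fundamental groups for the bounded family at hand. The remaining ingredients — the coefficient reduction, the adjunction computation, the mld formula of Step 3, and the final arithmetic — are routine once this is available.
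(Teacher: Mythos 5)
Your overall strategy --- transferring boundedness of the Koll\'ar component $(E,\Delta_E)$ into arithmetic constraints on $\operatorname{mld}_x(X,\Delta)$ via the identity $a_F(X,\Delta)+1=a_F(Y,\Delta_Y+E)+1+(a_E(X,\Delta)+1)\operatorname{ord}_F(E)$ --- is sound in outline, and Step 1 and the adjunction/boundedness part of Step 2 agree with the paper. But the point you yourself flag as ``the delicate one'' is a genuine gap, and it is exactly where the difficulty of the theorem lives. Your Step 3 needs two uniform indices at \emph{arbitrary} points of $E$, not just at codimension-two points of $Y$: an $N_0$ with $N_0E$ Cartier at the generic point of $c_Y(F^*)$ (to force $t\in\frac{1}{N_0}\mathbb{Z}$) and an $r_0$ with $r_0(K_Y+\Delta_Y+E)$ Cartier there (to force $b\in\frac{1}{r_0}\mathbb{Z}$). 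Neither follows from boundedness of the $(d-1)$-dimensional pairs $(E,\Delta_E)$, which controls $E$ itself but not the germ of $Y$ along $E$. Boundedness of complements \cite{Bir16a} only gives $n(K_Y+\Delta_Y+\Gamma_Y+E)\sim 0$ for an auxiliary $\Gamma_Y$, which bounds the index of the \emph{complemented} pair, not of $K_Y+\Delta_Y+E$; and a uniform Cartier index for the Weil divisor $E$ at every point of $E$ amounts to boundedness of the germ of a singularity admitting an $\epsilon$-plt blow-up, which is the harder, later result of \cite{HLS18} that this paper deliberately does not use. In effect you are assuming the conclusion of Theorem~\ref{bounded cartier index e-plt} (bounded Cartier index) as an input to Theorem~\ref{finiteness e-plt blow-up}, whereas in the paper it is a byproduct of the proof.

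The paper closes this gap by a different mechanism: it passes to the index one cover $\phi\colon X'\to X$ of $(X,\Delta)$ at $x$ and bounds $\deg(\phi)$. Lemma~\ref{good complement} produces a local complement with unique log canonical place, which via Lemma~\ref{finite log canonical} shows that the induced $\pi'\colon Y'\to X'$ is again an $\epsilon$-plt blow-up; the degree of $\phi_E=\phi_Y|_{E'}$ is bounded by comparing volumes of the two anticanonical log Fano pairs in a strictly log bounded family (Lemma~\ref{bounded volume}); and the ramification index $r_E$ is bounded using $a_{E'}(X',0)+1=r_E(a_E(X,\Delta)+1)\ge r_E a$ together with Lemma~\ref{bound of ld for Gorenstein}. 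The key point there is that $a_{E'}(X',0)$ is a non-negative \emph{integer} because $K_{X'}$ is Cartier, so an upper bound --- obtained with only Shokurov's codimension-two index bound \cite{Sho92} and a curve moved off the codimension-three bad locus --- already yields finitely many values; no index bound along all of $E$ is ever needed. Bounded $\deg(\phi)$ then makes $\operatorname{mld}_x(X,\Delta)$ range in a discrete set, and the inequality $\deg(\phi)\operatorname{mld}_x(X,\Delta)\le a_{E'}(X',0)+1$ caps it, giving finiteness. To salvage your route without the covering trick you would have to prove the uniform index bound along $E$ directly, and that is not routine.
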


The theorem implies the following corollary towards the ascending chain condition.

\begin{introcor}\label{acc and e-plt blow-ups}
The set $\mathcal{M}(d,\mathcal{S})_{0,\epsilon}$ satisfies the ascending chain condition.
\end{introcor}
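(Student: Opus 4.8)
The plan is to reduce the statement to Theorem~\ref{finiteness e-plt blow-up} by showing that an infinite strictly ascending chain in $\mathcal{M}(d,\mathcal{S})_{0,\epsilon}$ must already be contained in a single set $\mathcal{M}(d,\mathcal{S})_{a,\epsilon}$ with $a>0$, which is finite. So I would suppose that $m_1<m_2<\cdots$ is such a chain; after discarding $m_1$ if it equals $0$ we may assume all $m_i>0$, so every mld involved is positive and the relevant pairs are klt at the marked point. It then suffices to prove that each $m_i$ lies in $\mathcal{M}(d,\mathcal{S})_{a,\epsilon}$ for $a:=\min\{m_1,\epsilon\}>0$, since that set is finite by Theorem~\ref{finiteness e-plt blow-up} while the chain is infinite.

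The heart of the matter is a local reduction: \emph{if $(X,\Delta)$ is log canonical with $\mathrm{coeff}(\Delta)\in\mathcal{S}$, admits an $\epsilon$-plt blow-up $\pi\colon Y\to X$ at $x$, and $m:=\mathrm{mld}_x(X,\Delta)>0$, then after shrinking $X$ to a suitable open neighbourhood of $x$ the pair $(X,\Delta)$ is $\min\{m,\epsilon\}$-lc.} Shrinking affects neither $\mathrm{coeff}(\Delta)$, nor the dimension, nor the value $\mathrm{mld}_x(X,\Delta)=m$, nor the existence of an $\epsilon$-plt blow-up at $x$ (restrict $\pi$), so the reduction gives $m_i\in\mathcal{M}(d,\mathcal{S})_{\min\{m_i,\epsilon\},\epsilon}\subseteq\mathcal{M}(d,\mathcal{S})_{a,\epsilon}$ (the $a$-lc condition weakens as $a$ decreases and $\min\{m_i,\epsilon\}\ge a$), which is what we need. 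To prove the reduction, first observe that the non-klt locus of $(X,\Delta)$ has no positive-dimensional component through $x$: such a component $W$ supports a log canonical place $F$ whose center on $Y$ is the strict transform $W'$ of $W$; since $\mathrm{Exc}(\pi)=E$ the variety $W'$ meets $E$, and as $W'\not\subseteq E$, plt-ness of $(Y,\Delta_Y+E)$ forces $W'$ to be a component of $\lfloor\Delta_Y\rfloor$, so $W'\cap E$ is a log canonical center of codimension $\ge 2$ of $(Y,\Delta_Y+E)$, a contradiction. Since moreover $x$ itself is not a log canonical center (else $\mathrm{mld}_x(X,\Delta)\le 0$), the pair $(X,\Delta)$ is klt in a neighbourhood $V$ of $x$, and I would replace $X$ by $V$. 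Then I would use $K_Y+\Delta_Y+E=\pi^*(K_X+\Delta)+a(E;X,\Delta)\,E$, which yields for every divisor $F$ over $X$
\[
a(F;X,\Delta)=a(F;Y,\Delta_Y+E)-a(E;X,\Delta)\cdot\mathrm{ord}_F(E).
\]
If $c_X(F)=\{x\}$, then $a(F;X,\Delta)\ge\mathrm{mld}_x(X,\Delta)=m$ by definition. Otherwise $c_X(F)\nsubseteq\{x\}$, so $\pi$ is an isomorphism near the generic point of $c_X(F)$, hence $\mathrm{ord}_F(E)=0$ and $a(F;X,\Delta)=a(F;Y,\Delta_Y+E)$; this is positive because $(X,\Delta)$ is klt on $V$, hence $>\epsilon$ by the definition of an $\epsilon$-plt blow-up. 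Either way $a(F;X,\Delta)\ge\min\{m,\epsilon\}$, which proves the reduction.

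With the reduction in hand the remaining argument is the bookkeeping above: every $m_i$ lands in $\mathcal{M}(d,\mathcal{S})_{a,\epsilon}$, contradicting its finiteness. I expect the local reduction to be the only real obstacle; the point is that plt-ness of the Koll\'ar component rules out divisors of small log discrepancy whose center meets but differs from $x$, so that after shrinking every small log discrepancy comes from a divisor centered exactly at $x$ and is therefore bounded below by $\mathrm{mld}_x$, while divisors not centered at $x$ are bounded below by $\epsilon$. Note that the crude bound $\mathrm{mld}_x(X,\Delta)\le d$ is not even needed, and that the standard-coefficient hypothesis is untouched since we only ever shrink $X$.
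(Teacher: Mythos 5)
Your proof is correct and follows the same strategy as the paper's: an infinite strictly ascending chain in $\mathcal{M}(d,\mathcal{S})_{0,\epsilon}$ is eventually bounded below by a positive number and therefore lands in a single finite set $\mathcal{M}(d,\mathcal{S})_{a,\epsilon}$, contradicting Theorem~\ref{finiteness e-plt blow-up}. The paper asserts the membership step in one line, whereas you supply the justification (shrinking to a klt neighbourhood of $x$ and checking that divisors not centred at $x$ have log discrepancy $>\epsilon$ via the plt blow-up); the only blemish is that the sign in your formula $a(F;X,\Delta)=a(F;Y,\Delta_Y+E)-a(E;X,\Delta)\cdot\operatorname{ord}_F(E)$ should be $+$, which is harmless since you only invoke it when $\operatorname{ord}_F(E)=0$.
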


In Section~\ref{sec: examples} we give two examples that show that the theorem does not hold if $a$ and $\epsilon$ are not positive.
We also prove that $a$-log canonical singularities which admit an $\epsilon$-plt blow-up have bounded Cartier index.

\begin{introthm}\label{bounded cartier index e-plt}
Let $d$ be a positive integer, and let $a$ and $\epsilon$ be positive real numbers.
There exists $p$ only depending on $d,a$ and $\epsilon$ satisfying the following.
Let $(X,\Delta)$ be a $d$-dimensional $a$-log canonical pair with standard coefficients which admits an $\epsilon$-plt blow-up at $x\in X$.
Then $p(K_X+\Delta)$ is a Cartier divisor on a neighborhood of $x\in X$.
\end{introthm}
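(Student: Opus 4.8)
The plan is to reconstruct the germ $x\in X$ from the Koll\'ar component produced by the blow-up and to compute the order of the class of $K_X+\Delta$ in the local divisor class group $\operatorname{Cl}(\mathcal{O}_{X,x})$; we may assume $\dim X\geq 2$, the case $\dim X=1$ being vacuous. First I would fix an $\epsilon$-plt blow-up $\pi\colon Y\to X$ at $x$, with $Y$ $\qq$-factorial, exceptional prime divisor $E=\pi^{-1}(x)$, $\rho(Y/X)=1$, $\Delta_Y=\pi^{-1}_*\Delta$, $(Y,\Delta_Y+E)$ being $\epsilon$-plt and $-(K_Y+\Delta_Y+E)$ being $\pi$-ample; write $\pi^*(K_X+\Delta)=K_Y+\Delta_Y+(1-a_E)E$ with $a_E=a_E(E;X,\Delta)\geq a$. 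By Hacon's adjunction \cite[Theorem 0.1]{Hac14} the different $\Delta_E:=\operatorname{Diff}_E(\Delta_Y)$ has coefficients in a finite set depending only on $d$, and $(E,\Delta_E)$ is a $(d-1)$-dimensional $\epsilon$-klt log Fano pair: it is $\epsilon$-klt because $(Y,\Delta_Y+E)$ is $\epsilon$-plt, and $-(K_E+\Delta_E)=-(K_Y+\Delta_Y+E)|_E$ is ample because $E$ is contracted to a point. By the boundedness of Fano varieties \cite[Theorem 1.1]{Bir16b}, the pairs $(E,\Delta_E)$ arising this way belong to a bounded family; henceforth all constants depend only on $d,a,\epsilon$.

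The crux of the argument, and the step I expect to be the main obstacle, is to bound the Cartier index of the ample $\qq$-Cartier class $\mathcal{L}:=\mathcal{O}_E(-E|_E)$ on $E$ --- equivalently, the Cartier index of the $\qq$-Cartier prime divisor $E$ on $Y$ in a neighbourhood of $E$. The point is that this index is controlled by the $\epsilon$-plt-ness of $(Y,\Delta_Y+E)$: along any prime divisor $D\subseteq E$ the Cartier index of $K_Y+E$ at the generic point of $D$ is the integer $m_D$ for which $\operatorname{Diff}_E(0)$ has coefficient $(m_D-1)/m_D$ along $D$, hence is $<1/\epsilon$ since $(m_D-1)/m_D\leq\operatorname{coeff}_D\Delta_E$ and $(E,\Delta_E)$ is $\epsilon$-klt; one then upgrades this from generic points of divisors of $E$ to arbitrary points of $E$ using that $E$ lies in a bounded family, together with the observation that a cyclic quotient structure of large index along $E$ would, after a suitable weighted blow-up, yield a divisor over $Y$ with log discrepancy comparable to the reciprocal of that index, contradicting $\epsilon$-plt-ness. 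In this way one obtains an integer $N$ such that, for every such $(E,\Delta_E)$: both $N(K_E+\Delta_E)$ and $N\mathcal{L}$ are Cartier; $N$ is divisible by the orders of the torsion subgroups of $\Pic(E)$ and $\operatorname{Cl}(E)$ (these being finite because $E$ is Fano, so $\Pic(E)=\operatorname{NS}(E)$, and the family is bounded); and $N$ kills the denominator of $a_E$ (see the next paragraph).

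Granting this, the remaining steps are formal. Restricting $\pi^*(K_X+\Delta)=K_Y+\Delta_Y+(1-a_E)E$ to $E$ and using $\pi^*(K_X+\Delta)|_E\sim_\qq 0$ gives $-(K_E+\Delta_E)\sim_\qq a_E\mathcal{L}$; since $N(K_E+\Delta_E)$ and $N\mathcal{L}$ are Cartier, intersecting with a bounded very ample polarization on $E$ and using $a_E\geq a$ shows $a_E$ and its denominator are bounded, so after enlarging $N$ we may assume $Na_E\in\zz$. Next, because $\pi$ contracts exactly $E$ to $x$, the localization sequence gives (after shrinking $Y$ around $E$) an exact sequence $0\to\zz[E]\to\operatorname{Cl}(Y)\to\operatorname{Cl}(\mathcal{O}_{X,x})\to 0$, and restriction to $E$ identifies $\operatorname{Cl}(Y)$ with $\operatorname{Cl}(E)$ (valid since $Y$ has rational singularities), under which $[E]\mapsto [E|_E]=-[\mathcal{L}]$ and the lift $[K_Y+\Delta_Y]$ of $[K_X+\Delta]$ maps to $[(K_Y+\Delta_Y)|_E]=[K_E+\Delta_E]+[\mathcal{L}]$; hence $\operatorname{Cl}(\mathcal{O}_{X,x})\cong\operatorname{Cl}(E)/\zz[\mathcal{L}]$ and, since $\Pic(\mathcal{O}_{X,x})=0$, the divisor $p(K_X+\Delta)$ is Cartier near $x$ if and only if $p\,[K_E+\Delta_E]\in\zz[\mathcal{L}]$ in $\operatorname{Cl}(E)$. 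Finally $-N(K_E+\Delta_E)\sim_\qq Na_E\mathcal{L}$ with $Na_E\in\zz$, so $N(K_E+\Delta_E)$ and the integral divisor $-Na_E\mathcal{L}$ differ by a torsion class of $\operatorname{Cl}(E)$, which is killed by $N$; therefore $N^2(K_E+\Delta_E)=-N^2a_E\mathcal{L}\in\zz[\mathcal{L}]$ in $\operatorname{Cl}(E)$, and $p:=N^2$ --- depending only on $d,a,\epsilon$ --- has the required property.
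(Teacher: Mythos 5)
Your route is genuinely different from the paper's, and unfortunately its two load-bearing steps are the ones that are not justified. The paper's own proof is essentially one line: the Cartier index of $K_X+\Delta$ at $x$ \emph{is} the degree of the index one cover $\phi\colon X'\to X$, and the proof of Theorem~\ref{finiteness e-plt blow-up} already bounds $\deg(\phi)=\deg(\phi_E)\,r_E$ in terms of $d,a,\epsilon$ ($\deg(\phi_E)$ via the two-sided volume bounds on the strictly log bounded family of Koll\'ar components, and $r_E$ via $a_{E'}(X',0)+1=r_E(a_E(X,\Delta)+1)\geq r_E a$ together with Lemma~\ref{bound of ld for Gorenstein}). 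You never pass to the cover, and instead try to compute $\operatorname{Cl}(\mathcal{O}_{X,x})$ from $E$; that is an interesting idea but it forces you to prove strictly stronger statements than the paper needs.

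The first gap is your ``crux'': bounding the Cartier index of $E$ on $Y$ (equivalently of $\mathcal{L}=\mathcal{O}_E(-E|_E)$) at \emph{every} point of $E$. The different only controls the index at codimension~$2$ points of $Y$, and your proposed upgrade to deeper points via ``a cyclic quotient structure of large index along $E$'' does not apply: $Y$ need not be a cyclic quotient, or anything classified, at points of $E$ of codimension $\geq 3$ in $Y$, and bounding Cartier indices of arbitrary Weil divisors at such points of an $\epsilon$-lc variety is a hard problem, not a consequence of boundedness of the family of $E$'s (a bounded family of Fanos $E$ says nothing about the germ of $Y$ along $E$). This is precisely why Lemma~\ref{bound of ld for Gorenstein} in the paper only invokes Shokurov's codimension-$2$ bound and then moves an ample curve off the codimension-$3$ locus, extracting only the numerical information $-E\cdot C\in\frac{1}{p}\zz_{>0}$ rather than a global Cartier index for $E$. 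The second gap is the identification $\operatorname{Cl}(Y)\cong\operatorname{Cl}(E)$ ``by restriction, valid since $Y$ has rational singularities.'' Restriction of Weil divisor \emph{classes} to $E$ is not well defined integrally (only in $\Pic(E)\otimes\qq$ after $\qq$-factorializing), and rational singularities give statements about $\Pic$ of formal neighborhoods, not about class groups of the singular $Y$: torsion coming from the local class groups of singular points of $Y$ lying on $E$ can sit in the kernel of any such restriction map, and your final step needs exactly this injectivity to go from $p[K_E+\Delta_E]\in\zz[\mathcal{L}]$ back to $p(K_X+\Delta)$ being principal near $x$. (A smaller omission: you also need the denominators of the coefficients of $\Delta$ at components through $x$ to be bounded before $p(K_X+\Delta)$ can even be an integral divisor; this does follow from $\epsilon$-lc-ness of $(E,\Delta_E)$ and the shape of the different, but it should be said.) As it stands the argument is not a proof; the index one cover route closes all of these issues at once.
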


Exceptional singularities are those klt singularities for which any log canonical threshold is computed at a unique divisorial valuation~\cite[Definition 1.5]{Sho00}.
The exceptional Du Val surface singularities are the $E_6,E_7$ and $E_8$ singularities.
Hypersurfaces exceptional singularities were studied by Ishii and Prokhorov~\cite{IP01}.
In dimension $3$, Prokhorov and Markusevich proved that there are only finitely many $\epsilon$-log canonical exceptional quotient singularities~\cite{MP99}.
However, a classification of exceptional singularities in higher dimensions seems unfeasable.
In this direction, we prove the following application of our main theorem:

\begin{introcor}\label{acc exceptional singularities}
The ascending chain condition for minimal log discrepancies of exceptional singularities with standard coefficients holds.
\end{introcor}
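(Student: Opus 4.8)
The plan is to reduce the statement to Corollary~\ref{acc and e-plt blow-ups}. It suffices to prove that there is $\epsilon=\epsilon(d)>0$ such that every $d$-dimensional exceptional klt singularity $(X\ni x,\Delta)$ with standard coefficients admits an $\epsilon$-plt blow-up at $x$: indeed, the minimal log discrepancy at $x$ of every such singularity then lies in $\mathcal{M}(d,\mathcal{S})_{0,\epsilon}$, which satisfies the ascending chain condition by Corollary~\ref{acc and e-plt blow-ups}, and hence so does the set of minimal log discrepancies of $d$-dimensional exceptional singularities with standard coefficients.

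To produce such an $\epsilon$-plt blow-up I would start, using \cite[Lemma 1]{Xu14}, from a plt blow-up $\pi\colon Y\rightarrow X$ extracting a Koll\'ar component $E$ over $x$, and let $(E,\Delta_E)$ be the log Fano pair obtained by adjunction, so that $-(K_E+\Delta_E)$ is ample and, by \cite[Theorem 0.1]{Hac14}, the coefficients of $\Delta_E$ lie in a DCC set depending only on $d$. The key step is to show that $(E,\Delta_E)$ is an \emph{exceptional} Fano variety. Assuming it is not, there is an effective $\qq$-divisor $B_E$ on $E$ with $K_E+\Delta_E+B_E\sim_\qq 0$ such that $(E,\Delta_E+B_E)$ is log canonical but not klt; since $-(K_Y+E+\Delta_Y)$ is $\pi$-ample and restricts to $-(K_E+\Delta_E)$ on $E$, relative Serre vanishing allows one to lift a suitable multiple of $B_E$ to an effective $\qq$-divisor $B_Y$ on $Y$ not containing $E$, with $B_Y|_E=B_E$ and $K_Y+E+\Delta_Y+B_Y\sim_{\qq,\pi}0$. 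Inversion of adjunction along $E$ then shows $(Y,E+\Delta_Y+B_Y)$ is log canonical but not plt near $E$, hence has a log canonical place $F\neq E$ with centre inside $E$; pushing forward, $(X,\Delta+\pi_*B_Y)$ is a log canonical pair, crepant to $(Y,E+\Delta_Y+B_Y)$, with $K_X+\Delta+\pi_*B_Y\sim_\qq 0$ near $x$, for which both $E$ and $F$ are log canonical places over $x$. Since affineness of log discrepancies in the boundary shows the log canonical threshold of $\pi_*B_Y$ with respect to $(X,\Delta)$ equals $1$, this exhibits that threshold as being computed by two distinct divisorial valuations over $x$, contradicting the exceptionality of $(X\ni x,\Delta)$.

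Once $(E,\Delta_E)$ is known to be an exceptional Fano variety of dimension $d-1$, I would invoke the boundedness of exceptional Fano varieties \cite{Bir16a,Bir16b}: exceptional Fano varieties of fixed dimension whose coefficients lie in a fixed DCC set form a bounded family, and as boundedness together with the boundedness of complements forces the relevant coefficients into a fixed finite set, such a family is $\epsilon_0$-lc for a uniform $\epsilon_0=\epsilon_0(d)>0$. Finally, since $(Y,E+\Delta_Y)$ is plt and agrees with $(X,\Delta)$ away from $E$ over a neighbourhood of $x$, reverse adjunction along $E$ bounds from below, in terms of $\epsilon_0$, the log discrepancy with respect to $(Y,E+\Delta_Y)$ of every divisor over $Y$ whose centre meets $E$; this yields $\epsilon=\epsilon(d)>0$ for which $\pi$ is an $\epsilon$-plt blow-up in the sense of Definition~\ref{def:e-plt blow-up}, completing the argument.

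The step I expect to be the main obstacle is the second paragraph: producing the lift $B_Y$ of $B_E$ and running inversion of adjunction so as to obtain two distinct log canonical places centred at $x$, checking in particular that $F$ is genuinely different from $E$ and that $(X,\Delta+\pi_*B_Y)$ is still a $\qq$-complement near $x$. This is exactly where the exceptionality of the singularity gets converted into the exceptionality of the associated Koll\'ar component; the concluding reverse-adjunction comparison is routine in spirit but requires the precise form of inversion of adjunction for minimal log discrepancies relating a plt pair near a prime divisor to the divisor itself.
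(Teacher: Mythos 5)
Your overall reduction is exactly the paper's: both arguments show that every $d$-dimensional exceptional klt singularity with standard coefficients admits an $\epsilon$-plt blow-up for a uniform $\epsilon=\epsilon(d)>0$, and then conclude via Theorem~\ref{finiteness e-plt blow-up} and Corollary~\ref{acc and e-plt blow-ups}. Your intermediate step --- lifting a non-klt complement from the Koll\'ar component $E$ to $Y$ and pushing it down to $X$ to show that $(E,\Delta_E)$ is itself exceptional --- is sound (it uses the same lifting mechanism as Lemma~\ref{good complement}), and the uniform $\epsilon_0$-lc bound for exceptional log Fano pairs with hyperstandard coefficients is indeed a theorem of Birkar. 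The problem is the very last step. Passing from ``$(E,\Delta_E)$ is $\epsilon_0$-lc'' to ``every divisor $F\neq E$ over $Y$ with centre inside $E$ has $a_F(Y,\Delta_Y+E)+1$ bounded below in terms of $\epsilon_0$'' is not a routine reverse adjunction: it is the hard inequality ${\rm mld}_W(Y,\Delta_Y+E)\geq {\rm mld}_W(E,\Delta_E)$ of Shokurov's precise inversion of adjunction for minimal log discrepancies, which is open in general (only the lc/klt version, and the smooth or lci ambient case, are known). So as written the argument does not close.

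The gap can be repaired, and the repair is what the paper's Lemma~\ref{exceptional singularities} does: instead of descending to $E$ and trying to come back up, one works with complements directly on $Y$. By \cite[Theorem 1.8]{Bir16a} the plt pair $(Y,\Delta_Y+E)$ admits a strong $n$-complement $\Gamma_Y$ over $x$ with $n=n(d)$; exceptionality of $(X,\Delta)$ at $x$ forces $E$ to be the unique log canonical place of the crepant pair $(X,\Delta+\Gamma)$, hence of $(Y,\Delta_Y+\Gamma_Y+E)$; and since $n(K_Y+\Delta_Y+\Gamma_Y+E)\sim 0$ over $x$, every log discrepancy of this pair lies in $\frac{1}{n}\zz_{\geq 0}$, so it is at least $\frac{1}{n}$ for every $F\neq E$. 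Removing the effective divisor $\Gamma_Y$ only increases log discrepancies, so the blow-up is $\frac{1}{n}$-plt. If you prefer your route through the exceptionality of the Koll\'ar component, you should likewise convert that exceptionality into an $n$-complement of $(E,\Delta_E)$ with bounded $n$, lift it to $Y$ as in Lemma~\ref{good complement}, and run the same Cartier-index argument; either way, the quantitative lower bound has to come from the bounded Cartier index of a complement with a unique log canonical place, not from inversion of adjunction for minimal log discrepancies.
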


It is worth mentioning that Corollary~\ref{acc exceptional singularities} follows almost directly from the proof of~\cite[Theorem 4.4]{PS01} and~\cite[Theorem 1.1]{Bir16b}. These two results together with~\cite{Ish00} and~\cite{Fuj01} are the main motivation of Theorem~\ref{finiteness e-plt blow-up}. The proof of~\cite[Theorem 4.4]{PS01} already contains some of the ideas used in this article.

\subsection*{Acknowledgements} 
The author would like to thank Caucher Birkar, Christopher Hacon, Harold Blum, Stefano Filipazzi, and Yuri Prokhorov for many useful comments. 
The author was partially supported by NSF research grants no: DMS-1300750, DMS-1265285 and by a
grant from the Simons Foundation; Award Number: 256202.
After completing this project, the author was informed that J. Han, J. Liu and V. Shokurov have obtained 
the results of this paper with more general coefficients~\cite{HLS18}.

\section{Preliminaries}
\label{sec:2}

All varieties in this paper are quasi-projective over a fixed algebraically closed field of characeristic zero unless stated otherwise.
In this section we collect some definitions and preliminary results which will be used in the proof of the main theorem.

\subsection{Singularities}

In this subsection, we recall the singularities of the minimal model program, the set of standard coefficients and exceptional singularities.
We also prove some basic properties about singularities.

\begin{definition}{\em  
 In this paper a {\em sub-pair} $(X,\Delta)$ consists of a normal quasi-projective variety $X$ and a $\qq$-divisor $\Delta$ so that $K_X+\Delta$ is a $\qq$-Cartier $\qq$-divisor. 
If the coefficients of $\Delta$ are non-negative then we say that $(X,\Delta)$ is a {\em log pair}, or simply a {\em pair}. 

Let $\pi \colon W \rightarrow X$ be a log resolution of the pair $(X,\Delta)$ and denote by $K_W+\Delta_W+F_W$ the log pull-back of $K_X+\Delta$,
where $\Delta_W$ is the strict transform of $\Delta$ on $W$ and $F_W$ is an exceptional divisor.
The {\em discrepancy} of a prime divisor $E$ on $W$ with respect to the pair $(X,\Delta)$ is 
\[
a_E( X,\Delta) :=  -{\rm coeff}_E(\Delta_W+F_W).
\]
The {\em log discrepancy} of a prime divisor $E$ on $W$ with respect to the pair $(X,\Delta)$ is the value
\[
a_E(X,\Delta)+1.
\]
The {\em center} of $E$ on $X$ is its image on $X$ via the morphism $\pi$.
We denote by $c_X(E)$ the center of the prime divisor $E$ on the variety $X$.
We say that the sub-pair $(X,\Delta)$ is {\em sub-$\epsilon$-log canonical} 
if 
\[
a_E(X,\Delta)\geq  -1 +\epsilon,
\]
for every prime divisor $E$ on $W$. 
If $(X,\Delta)$ is a pair then we say that is $\epsilon$-log canonical in the above situation.
If $\epsilon>0$ is arbitrary we may also say that $(X,\Delta)$ is {\em Kawamata log terminal} (or klt)
and if $\epsilon=0$ we just say that the pair is {\em log canonical}, equivalently, the center of a log canonical place.
The {\em total discrepancy} $a(X,\Delta)$ of the pair $(X,\Delta)$ is the infimum between all discrepancies 
$a_E(X,\Delta)$ with $E$ a prime divisor over $X$.
Thus, $(X,\Delta)$ is $a$-log canonical if and only if $a(X,\Delta)+1\geq a$.

Let $(X,\Delta)$ be a log canonical pair. A {\em log canonical place} of $(X,\Delta)$ is a prime divisor $E$ on
a birational model of $X$ so that $a_E(X,\Delta)=-1$.
A {\em log canonical center} of $(X,\Delta)$ is the image on $X$ of a log canonical place.
}
\end{definition}
	
\begin{definition}
{\em 
Let $(X,\Delta)$ be a log pair and $x\in X$. 
The {\em minimal log discrepancy} of $(X,\Delta)$ at $x$ is
\[
{\rm mld}_x(X,\Delta):={\rm inf} \left\{ 
a_E(X,\Delta)+1 \mid \text{ $E$ is a prime divisor over $X$ so that $c_X(E)=x$}
\right\}.
\]
If $(X,\Delta)$ is a log canonical pair, taking a log resolution and using~\cite[Lemma 2.29]{KM98} 
we can see that the above infimum is indeed a minimum.
Observe that ${\rm mld}_x(X,\Delta)\geq 0$ if and only if the pair $(X,\Delta)$ is log canonical at $x\in X$.
Moreover, ${\rm mld}_x(X,\Delta)>0$ if and only if the pair $(X,\Delta)$ is klt at $x\in X$.
On the other hand, if $(X,\Delta)$ is not log canonical at $x\in X$, then ${\rm mld}_x(X,\Delta)=-\infty$ (see, e.g.~\cite[Corollary 2.32]{KM98}).
}
\end{definition}

\begin{definition}{\em 
We say that the pair $(X,\Delta)$ is {\em divisorial log terminal} (or dlt) if the following conditions hold:
\begin{enumerate} 
\item there exists a closed subset $Z\subset X$ so that $X\setminus Z$ is smooth,
\item $\Delta|_{X\setminus Z}$ has simple normal crossing support, and 
\item every divisor $E$ over $X$ with center on $Z$ has positive log discrepancy with respect to $(X,\Delta)$.
\end{enumerate}
A pair $(X,\Delta)$ is called {\em purely log terminal} (or plt) if the log discrepancy of every exceptional prime divisor
over $X$ is strictly positive. In this case $\lfloor \Delta \rfloor$ is a disjoint union of normal prime divisors.}
\end{definition}

\begin{definition}\label{def:standard coeff}
{\em 
Given a finite set $\mathcal{R}$ of rational numbers, we denote by 
\[
\mathcal{H}(\mathcal{R}):= \{1\} \cup \left \{ 1 -\frac{r}{m} \,\middle\vert\,  r\in \mathcal{R} \text{ and } m\in \mathbb{Z}_{>0} \right\},
\]
and call $\mathcal{H}(\mathcal{R})$ the {\em set of hyperstandard coefficients} associated to $\mathcal{R}$.
We denote $\mathcal{S}:=\mathcal{H}(\{1\})$ and call this the set of {\em standard coefficients}.
In what follows, we will focus on pairs $(X,\Delta)$ so that the coefficients of the boundary ${\rm coeff}(\Delta)$
belong to the set of standard coefficients. In this case, we say that the coefficients of $\Delta$ are {\em standard}.}
\end{definition}

\begin{definition}\label{def:e-plt blow-up}
{\em A {\em plt blow-up} of a log pair $(X,\Delta)$ at a point $x\in X$ is a projective birational morphism 
$\pi \colon Y \rightarrow X$ with the following properties:
\begin{enumerate}
\item $Y$ is a quasi-projective normal variety,
\item the exceptional locus of $\pi$ is an irreducible divisor $E$ whose image on $X$ is $x$, 
\item the pair $(Y,\Delta_Y+E)$ is purely log terminal, where $\Delta_Y$ is the strict transform of $\Delta$ on $Y$, and 
\item $-E$ is ample over $X$.
\end{enumerate}
We say that the pair $(X,\Delta)$ admits a plt blow-up at $x\in X$ if there exists $\pi$ with the above conditions.
Moreover, we say that the plt blow-up is an {\em $\epsilon$-plt blow-up} if any log discrepancy of $(Y,\Delta_Y+E)$ is either zero or greater than $\epsilon$.
Analogously, we say that the pair $(X,\Delta)$ admits an $\epsilon$-plt blow-up at $x\in X$ if there exists $\pi$ with the above conditions.
}
\end{definition}

\begin{definition}{\em 
The exceptional divisor of a plt blow-up are often called {\em Koll\'ar components} of the singularity~\cite[Definition 1.1]{LX17}.
We may use both acceptions on this paper.
We may also call a {\em Koll\'ar component} over the klt pair $(X,\Delta)$ the log Fano pair 
\[
K_E+\Delta_E := (K_Y+\Delta_Y+E)|_E
\]
obtained by adjunction to the $E\subset Y$. 
Observe that $-(K_E+\Delta_E)$ is ample and $(E,\Delta_E)$ is klt.
Hence, $(E,\Delta_E)$ is a log Fano pair~\cite[2.10]{Bir16a}.
}
\end{definition}

\begin{proposition}\label{e-plt blow-up}
Let $(X,\Delta)$ be a klt pair and $x\in X$. There exists an $\epsilon$-plt blow-up of $(X,\Delta)$ at $x$ for some positive $\epsilon$.
\end{proposition}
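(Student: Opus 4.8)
The plan is to first extract a plt blow-up using a known existence result and then to upgrade its plt-ness to a \emph{uniform} statement by passing to a log resolution, where discrepancies of log smooth pairs are completely explicit. First, by \cite[Lemma 1]{Xu14} the klt pair $(X,\Delta)$ admits a plt blow-up $\pi\colon Y\to X$ at $x$; denote by $E$ its irreducible exceptional divisor and by $\Delta_Y$ the strict transform of $\Delta$. Then $Y$ is quasi-projective and normal, $-E$ is ample over $X$, the image of $E$ is $x$, and $(Y,\Delta_Y+E)$ is plt, so conditions $(1)$, $(2)$ and $(4)$ of Definition~\ref{def:e-plt blow-up} hold for every choice of $\epsilon$. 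It remains to produce a positive real number $\epsilon$ for which every log discrepancy of $(Y,\Delta_Y+E)$ is either zero or strictly greater than $\epsilon$.

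I would next take a log resolution $g\colon W\to Y$ of $(Y,\Delta_Y+E)$ and write $K_W+B_W=g^{*}(K_Y+\Delta_Y+E)$, where $B_W=\sum_j b_j D_j$ is supported on a simple normal crossing divisor. Since $(Y,\Delta_Y+E)$ is plt we have $b_j\le 1$ for all $j$, and since $\lfloor\Delta_Y\rfloor=0$ the strict transform of $E$ is the only component of $\operatorname{Supp}(B_W)$ whose coefficient equals $1$. As there are only finitely many $D_j$, the number
\[
\epsilon_0:=\min\bigl(\{1\}\cup\{\,1-b_j\mid b_j<1\,\}\bigr)
\]
is well defined, positive, and at most $1$. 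Now for a log smooth pair with coefficients $\le 1$ the log discrepancies of all divisors over it are explicit: blowing up a smooth center $Z$ of codimension $c$ which has simple normal crossings with $B_W$ and is contained in exactly the components $D_{j_1},\dots,D_{j_m}$ produces an exceptional divisor of log discrepancy $\sum_{i=1}^m(1-b_{j_i})+(c-m)$, which lies in $\{0\}\cup[\epsilon_0,\infty)$, and the coefficient of this new divisor is again either $1$ or at most $1-\epsilon_0$. Since every divisor over $W$ is obtained by a sequence of such admissible blow-ups, an induction on their number shows $a_F(W,B_W)+1\in\{0\}\cup[\epsilon_0,\infty)$ for every divisor $F$ over $W$; this is the classical computation of discrepancies of log smooth pairs, cf. \cite{KM98}. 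As discrepancies over $Y$ agree with discrepancies over $W$, the same dichotomy holds for $(Y,\Delta_Y+E)$.

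To conclude, set $\epsilon:=\epsilon_0/2>0$. By the previous paragraph every log discrepancy of $(Y,\Delta_Y+E)$ is either $0$ (attained along $E$) or at least $\epsilon_0>\epsilon$, so $\pi$ is an $\epsilon$-plt blow-up of $(X,\Delta)$ at $x$, as desired. The one delicate point is the reduction, on the log resolution, to admissible blow-ups of strata in normal crossing position with $B_W$ — that is, that no non-toroidal modification can manufacture a small positive log discrepancy — which is precisely the standard discrepancy computation for log smooth pairs; everything else is bookkeeping.
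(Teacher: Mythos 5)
Your proposal is correct and follows essentially the same route as the paper: extract a plt blow-up via \cite[Lemma 1]{Xu14}, pass to a log resolution of $(Y,\Delta_Y+E)$, note that all coefficients other than that of the strict transform of $E$ are bounded away from $1$, and invoke the standard discrepancy computation for simple normal crossing pairs (the paper simply cites \cite[Lemma 2.29]{KM98} for the step you unfold by hand). The only caveat is that your phrase ``every divisor over $W$ is obtained by a sequence of such admissible blow-ups'' is not literally true for arbitrary divisorial valuations, but the dichotomy you need is exactly the content of \cite[Lemma 2.29]{KM98}, which you correctly identify as the delicate point, so the argument stands.
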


\begin{proof}
By~\cite[Lemma 1]{Xu14}, we can construct a plt blow-up $\pi \colon Y \rightarrow X$ of $(X,\Delta)$ over $x$.
Let $\pi_Y \colon W\rightarrow Y$ be a log resolution of $(Y,\Delta_Y+E)$, so we can write
\[
K_W+\Delta_W+F_W = \pi_Y^*(K_Y+\Delta_Y+E),
\]
where $\Delta_W$ is the strict transform of $\Delta_Y$ on $W$, and $F_W$ is an exceptional divisor.
We can take $\epsilon$ small enough so that 
\[
{\rm coeff}(\Delta_W + F_W - E_W ) < 1-\epsilon
\] 
where $E_W$ is the strict transform of $E$ on $W$.
By~\cite[Lemma 2.29]{KM98} we conclude that every exceptional divisor over $W$ has log discrepancy
either zero or greater than $\epsilon$ with respect to the purely log terminal pair $(Y,\Delta_Y+E)$.
\end{proof}

Theorem~\ref{finiteness e-plt blow-up} motivates the following natural invariant of klt singularities.

\begin{definition}\label{mKc}{\em
Let $(X,\Delta)$ be a klt pair and $x\in X$.
We define the {\em mildest component}, 
or $\mathcal{M}\mathcal{C}$ for simplicity, of $(X,\Delta)$ at the point $x\in X$ to be:
\[
{\rm \mathcal{M}\mathcal{C}}_x(X,\Delta) := 
\sup
\{
a(E,\Delta_E)+1 \mid \text{ $(E,\Delta_E)$ is a Koll\'ar component	 of $(X,\Delta)$ over $x\in X$}
\}.
\]
In Proposition~\ref{mildest Kollar component is a minimum}, we will prove that the $\mathcal{M}\mathcal{C}$ is indeed attained by some Koll\'ar component over $x\in X$.
In this setting, the conditions of Theorem~\ref{finiteness e-plt blow-up} and Theorem~\ref{bounded cartier index e-plt} can be abbreviated as 
$(X,\Delta)$ is a $d$-dimensional pair which is $a$-log canonical at $x\in X$ and ${\rm \mathcal{M}\mathcal{C}}_x(X,\Delta)\geq \epsilon$.
}
\end{definition}

\begin{remark}{\em 
In general one could define the mildest component of $(X,\Delta)$ at $x\in X$
for every Grothendieck point of the variety $X$.
However, by cutting down with general hyperplanes, 
the study of minimal log discrepancies on a variety $X$ of dimension $d$ at a point $x\in X$ of codimension $k$ 
is equivalent to the stuty of minimal log discrepancies on a variety $X$ of dimension $d-k$ at a closed point.}
\end{remark}

\begin{definition}{\em
A {\em dlt modification} of a log canonical pair $(X,\Delta)$ is a projective birational morphism $\pi \colon Y\rightarrow X$ so
that $\pi^*(K_X+\Delta)=K_Y+\Delta_Y+E_Y$, where $\Delta_Y$ is the strict transform of $\Delta$ on $X$ and $E_Y$ is an exceptional divisor over $X$
so that the sub-pair $(Y,\Delta_Y+E_Y)$ is a dlt pair.
It is known that every log canonical pair admits a dlt modification~\cite[Theorem 3.1]{KK13}.}
\end{definition}

\begin{definition}
{\em We say that a klt pair $(X,\Delta)$ is {\em exceptional} at $x\in X$ if for every boundary $\Gamma \geq 0$ on $X$
so that $(X,\Delta+\Gamma)$ is a log canonical pair, any dlt modification of $(X,\Delta+\Gamma)$ is indeed plt.}
\end{definition}

\begin{remark}{\em 
It is known that if $(X,\Delta)$ is exceptional at $x$ and $X$ is $\qq$-factorial, then there exists a unique prime divisor over $X$ which 
compute all log canonical thresholds over $x\in X$~\cite[Proposition 2.9]{MP99}.}
\end{remark}

\begin{lemma}\label{exceptional singularities}
Let $(X,\Delta)$ be a klt pair which is exceptional at $x\in X$.
Assume that the coefficients of $\Delta$ are standard.
There exists an $\epsilon_d$-blow-up of $(X,\Delta)$ at $x$, for some positive real number $\epsilon_d$ which only depends on the dimension $d$ of $X$.
\end{lemma}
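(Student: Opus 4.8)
The plan is to realise the desired blow-up as a plt blow-up whose Koll\'ar component is an \emph{exceptional} log Fano pair, and then to bound its singularities uniformly using the boundedness of Fano varieties. After shrinking $X$ to a small affine neighbourhood of $x$, I would fix a plt blow-up $\pi\colon Y\to X$ of $(X,\Delta)$ at $x$ with exceptional divisor $\mathcal E$ (this exists by \cite{Xu14}). Adjunction \cite{Hac14} produces the associated Koll\'ar component $(\mathcal E,\Delta_{\mathcal E})$ with $K_{\mathcal E}+\Delta_{\mathcal E}=(K_Y+\Delta_Y+\mathcal E)|_{\mathcal E}$: a klt log Fano pair of dimension $d-1$ whose boundary coefficients, since $\Delta$ has standard coefficients and $(Y,\Delta_Y+\mathcal E)$ is plt, are again standard, and in particular lie in a fixed DCC set depending only on $d$.

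The crucial step is to show that $(\mathcal E,\Delta_{\mathcal E})$ is an exceptional log Fano pair; this is essentially the argument already contained in the proof of \cite[Theorem 4.4]{PS01}. Suppose not: then there is $0\le G\sim_\qq -(K_{\mathcal E}+\Delta_{\mathcal E})$ and $t\in(0,1]$ such that $(\mathcal E,\Delta_{\mathcal E}+tG)$ is log canonical but not klt. Using that $-(K_Y+\Delta_Y+\mathcal E)$ is ample over $X$ (it is numerically equivalent over $X$ to a positive multiple of the $\pi$-ample divisor $-\mathcal E$) and that $(Y,\Delta_Y)$ is klt, a Kawamata--Viehweg vanishing argument lets me lift a pluri-section cutting out a suitable multiple of $tG$ on $\mathcal E$ to an effective $\qq$-divisor $\Gamma$ on $Y$, with no component equal to $\mathcal E$, such that $\Gamma|_{\mathcal E}=tG$ and $K_Y+\Delta_Y+\mathcal E+\Gamma\sim_\qq\pi^*A$ for some ample divisor $A$ on $X$. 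By inversion of adjunction \cite{Kol13} the pair $(Y,\Delta_Y+\mathcal E+\Gamma)$ is log canonical near $\mathcal E$, and since $(\mathcal E,\Delta_{\mathcal E}+tG)$ is not klt it is not plt there, acquiring a log canonical centre contained in $\mathcal E$ other than $\mathcal E$ itself. Putting $\Gamma_X:=\pi_*\Gamma\ge 0$, one gets $K_X+\Delta+\Gamma_X\sim_\qq A$, so $(X,\Delta+\Gamma_X)$ is log canonical near $x$ and (as $-\mathcal E$ is $\pi$-ample) $\mathcal E$ is a log canonical place of it; hence a dlt modification of $(X,\Delta+\Gamma_X)$ is not plt, contradicting the exceptionality of $(X,\Delta)$ at $x$. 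Therefore $(\mathcal E,\Delta_{\mathcal E})$ is exceptional.

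By \cite{Bir16b} together with the theory of complements \cite{Bir16a}, exceptional log Fano pairs of dimension $d-1$ with coefficients in our fixed set form a bounded family, and are therefore $\epsilon_d$-log canonical for a single positive real number $\epsilon_d$ depending only on $d$. Precise inversion of adjunction for the plt pair $(Y,\Delta_Y+\mathcal E)$ \cite{Kol13} then transfers this bound upstairs: every prime divisor over $X$ with centre $x$ other than $\mathcal E$ has log discrepancy greater than $\epsilon_d$ with respect to $(Y,\Delta_Y+\mathcal E)$. Hence $\pi$ is an $\epsilon_d$-plt blow-up of $(X,\Delta)$ at $x$, with $\epsilon_d$ depending only on $d$.

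The main obstacle is the second step: converting a failure of exceptionality of the Koll\'ar component into an honest boundary $\Gamma_X$ on $X$ witnessing the failure of exceptionality of $(X,\Delta)$ at $x$. This forces one to lift the complement $tG$ from $\mathcal E$ to all of $Y$ through vanishing theorems and then to track the singularities both on $Y$ and after pushing forward, via inversion of adjunction. The remaining inputs — that exceptional Fano pairs of bounded dimension are uniformly $\epsilon$-log canonical, and the transfer back via precise inversion of adjunction for plt pairs — are technical but essentially standard.
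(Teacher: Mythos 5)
Your overall strategy --- show the Koll\'ar component $(\mathcal{E},\Delta_{\mathcal{E}})$ is an exceptional log Fano, invoke boundedness to get a uniform $\epsilon_d$-lc bound on it, then transfer that bound back to $(Y,\Delta_Y+\mathcal{E})$ --- breaks down at the final step. The statement you call ``precise inversion of adjunction,'' namely that $(\mathcal{E},\Delta_{\mathcal{E}})$ being $\epsilon_d$-log canonical forces every exceptional divisor $F\neq\mathcal{E}$ over $Y$ with centre in $\mathcal{E}$ to satisfy $a_F(Y,\Delta_Y+\mathcal{E})+1>\epsilon_d$, is the hard (and in this generality open) direction of the mld form of inversion of adjunction; it is not available as a black box for a singular $Y$. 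The inequality that is standard goes the wrong way: divisors over $\mathcal{E}$ lift to divisors over $Y$ with the same log discrepancy, which gives an \emph{upper} bound for ${\rm mld}$ along $\mathcal{E}$ on $Y$ in terms of ${\rm mld}(\mathcal{E},\Delta_{\mathcal{E}})$, and that is useless here. In addition, adjunction to $\mathcal{E}$ says nothing about divisors over $Y$ whose centres are disjoint from $\mathcal{E}$ but lie over a punctured neighbourhood of $x$, and the definition of an $\epsilon$-plt blow-up constrains those log discrepancies as well.

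The paper avoids both problems by arguing through complements rather than through mld bounds on the Koll\'ar component: by \cite[Theorem 1.8]{Bir16a} there is a strong $n$-complement $K_Y+\Delta_Y+\Gamma_Y+E$ of $K_Y+\Delta_Y+E$ over $x$ with $n$ depending only on $d$ (and the fixed set $\mathcal{S}$); exceptionality of $(X,\Delta)$ at $x$ forces this log canonical pair to have $E$ as its unique log canonical place; and since $n(K_Y+\Delta_Y+\Gamma_Y+E)$ is Cartier over $x$, every divisor $F\neq E$ satisfies $a_F(Y,\Delta_Y+E)+1\geq a_F(Y,\Delta_Y+\Gamma_Y+E)+1\geq 1/n$. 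This yields $\epsilon_d=1/n$ with no inversion-of-adjunction input and controls all divisors over the neighbourhood at once. Your first step (exceptionality of the Koll\'ar component, in the spirit of \cite[Theorem 4.4]{PS01}) is believable, but to extract a uniform bound from it you would still have to reroute through bounded complements as above, at which point exceptionality of $(\mathcal{E},\Delta_{\mathcal{E}})$ is not even needed --- only exceptionality of $(X,\Delta)$ at $x$ is used, to guarantee the complement has a unique log canonical place.
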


\begin{proof}
By Lemma~\ref{e-plt blow-up}, we can extract an $\epsilon$-plt blow-up over $x\in X$ for some real number $\epsilon$.
By~\cite[Theorem 1.8]{Bir16a} there exists a strong $n$-complement $K_Y+\Delta_Y+\Gamma_Y+E$ for $K_Y+\Delta_Y+E$ over $x$,
for $n$ only depending on $d$ and $\mathcal{S}$.
Since $\mathcal{S}$ is a fixed set then $n$ only depends on $d$.
Since $K_Y+\Delta_Y+\Gamma_Y+E$ is $\qq$-linearly trivial over $x\in X$ then we can write
\[
K_Y+\Delta_Y+\Gamma_Y+E = \pi^*(K_X+\Delta+\Gamma)
\]
for some boundary $\Gamma$ on $X$ so that $(X,\Delta+\Gamma)$ is a log canonical pair.
By the exceptionality of $(X,\Delta)$ at $x$ we deduce that $(X,\Delta+\Gamma)$ has a unique log canonical place 
and hence $(Y,\Delta_Y+\Gamma_Y+E)$ has a unique log canonical place $E$.
Moreover, since $n(K_Y+\Delta_Y+\Gamma_Y+E)$ is Cartier over $x$, we conclude that 
\[
a_F( Y, \Delta_Y+E) +1 \geq a_F(Y, \Delta_Y+\Gamma_Y +E) + 1 \geq \frac{1}{n}
\]
for every divisor $F$ over $Y$ which is not equal to $E$.
Thus, it suffices to take $\epsilon_d=\frac{1}{n}$.
\end{proof}

\subsection{Bounded families}

In this subsection, we recall the definition of a log bounded family and prove some properties of such families.

\begin{definition}{\em 
We say that a set of pairs $\mathcal{P}$ is {\em log bounded} if there exists a projective morphism
$\mathcal{X} \rightarrow T$ of possibly reducible varieties and a divisor $\mathcal{B}$ on $\mathcal{X}$ so that for every pair
$(X,\Delta)\in \mathcal{P}$ there exists a closed point $t\in T$ and an isomorphism 
$\phi \colon  \mathcal{X}_t \rightarrow X$ so that $(\mathcal{X}_t,\mathcal{B}_t)$ is a pair and
$\phi_*^{-1}D\leq \mathcal{B}_t$.
If moreover $\phi$ induces an isomorphism of pairs
between $(X,\Delta)$ and $(\mathcal{X}_t, \mathcal{B}_t) $, meaning that 
for every prime divisor $D$ on $X$ we have that 
\[
{\rm coeff}_D( \Delta) = {\rm coeff}_{\phi^{-1}_*D}(\mathcal{B}_t),
\]
we will say that the set of pairs $\mathcal{P}$ is {\em strictly log bounded}.
We say that $\mathcal{X}\rightarrow T$ is a {\em bounding family} for the varieties $X$ 
and that $\mathcal{B}$ is a {\em bounding divisor} for the set of divisors $\{ \Delta \mid (X,\Delta)\in \mathcal{P}\}$.}
\end{definition}

The following lemma follows from the definition of strictly log bounded family.

\begin{lemma}\label{from lb to slb}
Let $\mathcal{P}$ be a log bounded family of pairs so that the set $\{ {\rm coeff}(\Delta) \mid (X,\Delta) \in \mathcal{P}\}$
is finite. Then the family $\mathcal{P}$ is strictly log bounded.
\end{lemma}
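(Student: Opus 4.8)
The plan is to exploit the finiteness of the coefficient set to promote an arbitrary bounding family to a strictly bounding one: after refining the base so that the components of the bounding divisor restrict to prime divisors on all fibers, one takes finitely many copies of the family, one for each admissible pattern of coefficients on those components.

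First I would fix a projective morphism $\mathcal{X}\rightarrow T$ and a divisor $\mathcal{B}$ on $\mathcal{X}$ witnessing that $\mathcal{P}$ is log bounded, write $\supp(\mathcal{B})=\mathcal{B}_1\cup\dots\cup\mathcal{B}_N$ for its decomposition into prime components, and set $\mathcal{C}:=\{0\}\cup\{{\rm coeff}(\Delta)\mid (X,\Delta)\in\mathcal{P}\}$, which is finite by hypothesis. By Noetherian induction on $T$ --- replacing $T$ by a disjoint union of finitely many locally closed subvarieties and $\mathcal{X},\mathcal{B}$ by the corresponding base changes --- I may assume in addition that $\mathcal{X}\to T$ and each $\mathcal{B}_i\to T$ are flat, that for every closed point $t\in T$ the fiber $(\mathcal{B}_i)_t$ is a prime divisor on $\mathcal{X}_t$, and that $(\mathcal{B}_i)_t\neq(\mathcal{B}_j)_t$ whenever $i\neq j$. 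Here one uses generic flatness together with the facts that geometric integrality of the fibers of $\mathcal{B}_i\to T$, and the non-coincidence of the fibers of $\mathcal{B}_i\to T$ and $\mathcal{B}_j\to T$, are constructible conditions on $T$. After this reduction no fiber $\mathcal{X}_t$ is contained in $\supp(\mathcal{B})$, so $\mathcal{B}_t$ is defined, and its prime components are exactly the divisors $(\mathcal{B}_i)_t$ for which ${\rm coeff}_{\mathcal{B}_i}(\mathcal{B})\neq 0$.

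Next I would assemble the strictly bounding family. For each function $f\colon\{1,\dots,N\}\to\mathcal{C}$ put $\mathcal{B}^f:=\sum_{i=1}^N f(i)\,\mathcal{B}_i$, a divisor on $\mathcal{X}$; there are only finitely many such $f$. Let $T'$ be the disjoint union of one copy $T_f$ of $T$ for each $f$, let $\mathcal{X}'\to T'$ be the corresponding disjoint union of copies of $\mathcal{X}\to T$, and let $\mathcal{B}'$ be the divisor on $\mathcal{X}'$ restricting to $\mathcal{B}^f$ over $T_f$. Given $(X,\Delta)\in\mathcal{P}$, log boundedness supplies a closed point $t\in T$ and an isomorphism $\phi\colon\mathcal{X}_t\to X$ with $\phi_*^{-1}\Delta\leq\mathcal{B}_t$. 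Since $\phi_*^{-1}\Delta$ is effective and supported on $\supp(\mathcal{B}_t)=\bigcup_i(\mathcal{B}_i)_t$, and the $(\mathcal{B}_i)_t$ are distinct prime divisors, we may write $\phi_*^{-1}\Delta=\sum_i a_i\,(\mathcal{B}_i)_t$ with every $a_i\in\mathcal{C}$. Taking $f(i):=a_i$ and regarding $t$ as a point of the copy $T_f$, we obtain $\mathcal{B}'_t=\mathcal{B}^f_t=\sum_i f(i)(\mathcal{B}_i)_t=\phi_*^{-1}\Delta$, so $\phi$ induces an isomorphism of pairs between $(\mathcal{X}'_t,\mathcal{B}'_t)$ and $(X,\Delta)$. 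In particular $(\mathcal{X}'_t,\mathcal{B}'_t)$ is a pair and ${\rm coeff}_D(\Delta)={\rm coeff}_{\phi_*^{-1}D}(\mathcal{B}'_t)$ for every prime divisor $D$ on $X$, which is exactly what is needed.

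I expect the only genuine obstacle to be the stratification step: ensuring that, after a finite stratification of the base, every component of $\mathcal{B}$ restricts to an honest prime divisor on every fiber and that distinct components remain distinct on fibers. This is a routine application of constructibility of geometric integrality in families combined with Noetherian induction, but it is the one place where real algebraic geometry rather than bookkeeping enters; granting it, the proof reduces to the elementary observation that only finitely many coefficient patterns can occur.
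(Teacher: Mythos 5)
Your proof is correct, and it carries out in detail exactly the argument the paper leaves implicit (the paper offers no proof, merely asserting that the lemma ``follows from the definition''): stratify the base so that the components of the bounding divisor restrict to distinct prime divisors on every fiber, then duplicate the family over finitely many copies of the base, one for each of the finitely many coefficient patterns. The one technically nontrivial step --- constructibility of geometric integrality of the fibers of each $\mathcal{B}_i\to T$ plus Noetherian induction --- is correctly identified and is indeed standard.
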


\begin{lemma}\label{from bounded to log bounded}
Let $\mathcal{P}$ be a bounded family of $d$-dimensional projective varieties 
and $\mathcal{Q}$ a set of pairs $\{ (X,\Delta) \mid X \in \mathcal{P}\}$,
so that ${\rm coeff}(\Delta)$ satisfies the descending chain condition
and for every $(X,\Delta)\in \mathcal{Q}$ we have that $-(K_X+\Delta)$ is pseudo-effective.
Then $\mathcal{Q}$ is a log bounded set of pairs.
\end{lemma}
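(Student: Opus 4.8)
The plan is to reduce log boundedness of $\mathcal{Q}$ to a uniform upper bound on the degree of the boundaries against a fixed polarization, and then to extract that bound from the pseudo-effectivity hypothesis via adjunction on a general complete intersection curve. Since $\mathcal{P}$ is bounded, fix a bounding family $\mathcal{X}\to T$ with $T$ of finite type together with a divisor $\mathcal{H}$ on $\mathcal{X}$ which is very ample over $T$; by generic flatness and Noetherian induction on $T$ we may assume $\mathcal{X}\to T$ is flat, so the fiberwise self-intersection $\mathcal{H}_t^{d}$ takes only finitely many values and is bounded above by some $N$ depending only on $\mathcal{P}$ and $\mathcal{H}$. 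Given $(X,\Delta)\in\mathcal{Q}$ write $X=\mathcal{X}_t$ and $H=\mathcal{H}_t$. A general complete intersection $C=H_1\cap\cdots\cap H_{d-1}$ with $H_i\in|H|$ computes $D\cdot H^{d-1}$ for an effective divisor $D$ as a degree on a curve that is not contained in $\operatorname{supp}(D)$, so $E\cdot H^{d-1}\geq 0$ for every effective $E$; since $-(K_X+\Delta)$ is a limit of big classes, this gives $-(K_X+\Delta)\cdot H^{d-1}\geq 0$, hence
\[
\Delta\cdot H^{d-1}\ \leq\ -K_X\cdot H^{d-1}.
\]

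To bound the right-hand side I take $C$ as above; by Bertini it is a smooth connected curve contained in the smooth locus of $X$ (the case $d=1$ being the trivial statement that $\deg\Delta\leq -\deg K_X$). Iterated adjunction along the Cartier divisors $H_i$ yields $2p_a(C)-2=K_X\cdot C+(d-1)\,H^{d}$, and since $p_a(C)=g(C)\geq 0$ we obtain
\[
\Delta\cdot H^{d-1}\ \leq\ -K_X\cdot C\ =\ (d-1)\,H^{d}+2-2p_a(C)\ \leq\ (d-1)N+2\ =:\ M,
\]
a constant depending only on $\mathcal{P}$ and $\mathcal{H}$. Now the descending chain condition on $\{{\rm coeff}(\Delta)\mid(X,\Delta)\in\mathcal{Q}\}$ forces the positive coefficients that occur to have a positive minimum $\delta_0$. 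Since $H$ is very ample, $D\cdot H^{d-1}\in\zz_{>0}$ for every prime divisor $D$ on $X$, so each component of $\Delta$ contributes at least $\delta_0$ to $\Delta\cdot H^{d-1}\leq M$; hence $\Delta$ has at most $M/\delta_0$ irreducible components, each of $H^{d-1}$-degree at most $M/\delta_0$, and each coefficient is at most $M$.

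Therefore the supports of the divisors $\Delta$ with $(X,\Delta)\in\mathcal{Q}$ range over a family of divisors of bounded degree on $\mathcal{X}/T$, which is parametrized by finitely many components of the relative Hilbert scheme of $\mathcal{X}\to T$, each of finite type over $T$; pulling back the universal divisor over this parameter space and multiplying its coefficients by $M$ produces a divisor $\mathcal{B}$ on a bounding family which dominates every such $\Delta$ after the appropriate isomorphism. This exhibits $\mathcal{Q}$ as a log bounded family. The steps that genuinely require care are the reduction to a flat bounding family and the selection of finitely many relevant Hilbert-scheme components, together with the choice of a general complete intersection curve lying in the smooth locus; these are routine uses of Noetherianity of $T$ and Bertini's theorems, and everything else is a formal consequence of pseudo-effectivity and the descending chain condition.
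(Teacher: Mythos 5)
Your proof is correct and follows essentially the same route as the paper: use the pseudo-effectivity of $-(K_X+\Delta)$ and the DCC lower bound $\delta_0$ on the coefficients to bound $\Delta_{\rm red}\cdot H^{d-1}$ against a uniform polarization coming from the bounded family, then conclude log boundedness. The only difference is that you open up two steps the paper handles by assertion or citation: you derive the uniform bound on $-K_X\cdot H^{d-1}$ by adjunction along a general complete intersection curve (the paper simply assumes $A^{d-1}\cdot(-K_X)\leq C$ as part of the boundedness data), and you re-prove via relative Hilbert schemes the passage from bounded degree of the support to log boundedness, which the paper outsources to \cite[Lemma 3.7(2)]{Ale94}.
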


\begin{proof}
We can find a positive constant $C$  so that for each $X\in \mathcal{P}$ there is a very ample Cartier divisor $A$ with $A^d\leq C$.
Morever, we can assume that $A^{d-1}(-K_X) \leq C$.
The set ${\rm coeff}(\Delta)$ satisfies the descending chain condition, so there exists $\delta>0$ small enough so that
$\delta < {\rm coeff}(\Delta)$ for every boundary $\Delta$ of a pair $(X,\Delta)$ on $\mathcal{Q}$.
Since $-(K_X+\Delta)$ is pseudo-effective we have that $A^{d-1}\cdot (-K_X+\Delta) \geq 0$, hence we conclude that
\[
A^{d-1}\cdot \left( \frac{1}{\delta} \Delta_{\rm red} \right)  \leq A^{d-1}\cdot \Delta \leq A^{d-1} \cdot \left( -K_X \right) \leq C,
\]
where $\Delta_{\rm red}$ is $\Delta$ with the reduced structure.
Thus, we get that $A^{d-1}\cdot \Delta_{\rm red}\leq \delta C$, so by~\cite[Lemma 3.7.(2)]{Ale94} we conclude that the set of pairs $\mathcal{Q}$ is log bounded.
\end{proof}

\begin{definition}{\em 
Let $X$ be an irreducible projective variety of dimension $d$ and let $D$ be a $\qq$-Cartier divisor on $X$.
The {\em volume} of $D$ is
\[
{\rm vol}(D):=\limsup_{m\rightarrow \infty} \frac{d! h^0(X,\mathcal{O}_X(mD))}{m^n}.
\]
In particular, a big divisor has positive volume.}
\end{definition}

\begin{lemma}\label{bounded volume}
Let $\mathcal{P}$ be a strictly log bounded family of $d$-dimensional klt pairs.
Then there exist positive real numbers $v_1$ and $v_2$, only depending on $\mathcal{P}$, 
so that for every $(X,\Delta) \in \mathcal{P}$ with $-(K_X+\Delta)$ ample we have that
\[
v_1 < \vol(-(K_X+\Delta)) < v_2.
\]
\end{lemma}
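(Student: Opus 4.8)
The plan is to exploit that volume is both a deformation-invariant and a "coarse" invariant of the linear system, so that boundedness of the family forces two-sided bounds on $\vol(-(K_X+\Delta))$. Concretely, write $\mathcal{X}\to T$ for a bounding family and $\mathcal{B}$ for a bounding divisor realizing the strict log boundedness of $\mathcal P$; after a stratification and finite base change on $T$ we may assume $T$ is smooth, $\mathcal X\to T$ is flat with irreducible fibers, and $\mathcal B$ is flat over $T$ with the correct coefficients on every fiber (so that $(\mathcal X_t,\mathcal B_t)$ recovers the given pair $(X,\Delta)$ on the nose, by the definition of strictly log bounded). Since there are only finitely many such strata and a finite set suffices to bound the volumes over each, it is enough to treat one such family $\mathcal X\to T$ with $T$ irreducible and $-(K_{\mathcal X/T}+\mathcal B)$ relatively $\qq$-Cartier.

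First I would fix a very ample Cartier divisor $\mathcal A$ on $\mathcal X$, relatively very ample over $T$, and pass to the numerical class of $-(K_X+\Delta)$. On a flat family the intersection numbers $\mathcal A_t^{d-1}\cdot(-(K_{\mathcal X_t}+\mathcal B_t))$ and $\mathcal A_t^d$ are locally constant on $T$, hence take finitely many values; call the relevant bound $C$. This gives $\vol(-(K_X+\Delta))=\vol(-(K_X+\Delta))$ bounded above: indeed $\vol$ of a divisor $D$ with $0\le D$ ample is at most $(A^{d-1}\cdot D)^{?}$-type estimates — more cleanly, one uses that for an ample $\qq$-divisor $D$ one has $\vol(D)=D^d$, and $D^d\le \big((A^{d-1}\cdot D)/ (\text{something})\big)$ is not literally an inequality, so instead I would bound $D^d$ directly: writing $mD\equiv$ a sum of at most $mC$ copies of hyperplane sections after scaling $\mathcal A$ appropriately, the self-intersection $D^d$ is bounded in terms of $C$ and $d$ by standard inequalities on nef (here ample) divisors, e.g.\ the Khovanskii–Teissier inequalities applied to $D$ and $\mathcal A_t$. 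This yields $v_2$.

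For the lower bound, the point is that $-(K_X+\Delta)$ is ample and the pair is klt with coefficients in the fixed finite set, so $D=-(K_X+\Delta)$ is a $\qq$-Cartier $\qq$-divisor whose denominator is bounded (by flatness and finiteness of coefficients in the family, the Cartier index of $K_{\mathcal X_t}+\mathcal B_t$ is locally constant). Hence $ND$ is an ample Cartier divisor for a fixed $N$, so $ND$ is effective and nonzero, giving $\vol(ND)=(ND)^d\ge 1$ — wait, an ample integral divisor has $(ND)^d\ge 1$ since it is a positive integer. Thus $\vol(D)\ge N^{-d}>0$, and since $N$ is bounded over the family this produces $v_1$.

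The main obstacle is the upper bound: one must control $D^d$ knowing only $\mathcal A_t^{d-1}\cdot D$ and $\mathcal A_t^d$. The clean way is the Khovanskii–Teissier inequality $(\mathcal A_t^{d-1}\cdot D)^d\ge (\mathcal A_t^d)^{d-1} D^d$ for nef divisors on a projective variety of dimension $d$, which rearranges to $D^d\le (\mathcal A_t^{d-1}\cdot D)^d/(\mathcal A_t^d)^{d-1}\le C^d$ since $\mathcal A_t^d\ge 1$; combined with $\vol(D)=D^d$ for ample $D$ this gives $v_2=C^d+1$. One should double-check that, after the initial stratification, $D$ is genuinely nef (it is, being ample by hypothesis) and that the intersection numbers really are constant on each stratum, which is where flatness of $\mathcal X\to T$ and of $\mathcal B$ is used.
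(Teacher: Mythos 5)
Your argument is correct, and at bottom it shares the paper's strategy: stratify the bounding family $\mathcal{X}\to T$ into finitely many pieces over which the fibers are normal, the family is flat, and $-(K_{\mathcal{X}_t}+\mathcal{B}_t)$ is a $\qq$-Cartier class restricting from the total space, so that intersection numbers are locally constant. Where you diverge is in how the two bounds are extracted. The paper simply notes that on each stratum $\vol(-(K_{\mathcal{X}_t}+\mathcal{B}_t))=(-(K_{\mathcal{X}_t}+\mathcal{B}_t))^d$ is locally constant, hence takes only finitely many values over all of $T$, each of which is positive since the divisor is ample; this yields $v_1$ and $v_2$ simultaneously. You instead bound $\mathcal{A}_t^{d-1}\cdot D$ and invoke the Khovanskii--Teissier inequality $(\mathcal{A}_t^{d-1}\cdot D)^d\geq(\mathcal{A}_t^{d})^{d-1}\,D^{d}$ for the upper bound, and use boundedness of the Cartier index of $K_X+\Delta$ to get $(ND)^d\geq 1$, hence $\vol(D)\geq N^{-d}$, for the lower bound. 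Both steps are valid but strictly speaking detours: once you know intersection numbers against $\mathcal{A}_t$ are locally constant on your strata, the identical reasoning applied to $D^{d}$ itself finishes the proof. One caveat on your lower bound: the Cartier index of $K_{\mathcal{X}_t}+\mathcal{B}_t$ is not ``locally constant by flatness'' --- it can jump in families --- but its boundedness on a strictly log bounded klt family with coefficients in a fixed finite set is exactly \cite[Lemma 2.25]{Bir16a}, which this paper invokes elsewhere (e.g.\ in the proof of Lemma~\ref{bound of ld for Gorenstein}), so your conclusion stands; indeed your route makes the positivity of $v_1$ somewhat more transparent than the paper's terse appeal to upper-semicontinuity and ``finitely many values''.
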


\begin{proof}
Since ampleness is open in families, we may restrict to an open set $U\subset T$ 
so that $-(K_{\mathcal{X}_t} +\mathcal{B}_t)$ is ample for every $t\in U$.
Furthermore, by Noetherian induction we may assume that the induced morphism $\mathcal{X}_U \rightarrow U$
is a projective smooth morphism of relative dimension $d$ with normal fibers.
In this case, we have that
\[
\vol( -(K_{\mathcal{X}_t}+\mathcal{B}_t)) = ( -(K_{\mathcal{X}_t}+\mathcal{B}_t))^d
\]
is an upper-semicontinuous function on $t\in U$.
Hence, it takes finitely many values on $U$.
\end{proof}

\begin{lemma}\label{log bounded curve}
Let $\mathcal{P}$ be a strictly log bounded family of $d$-dimensional klt pairs.
We can find a positive real number $M$, only depending on $\mathcal{P}$,
so that for every $(X,\Delta)\in \mathcal{P}$ with $-(K_X+\Delta)$ ample
there exists an ample curve $C$ on $X$ so that 
\[
-(K_X+\Delta)\cdot C \leq M.
\]
\end{lemma}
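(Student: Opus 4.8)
The plan is to read off the curve $C$ directly from the bounding family. Let $\mathcal{X}\to T$, with bounding divisor $\mathcal{B}$, witness that $\mathcal{P}$ is strictly log bounded, so that every $(X,\Delta)\in\mathcal{P}$ is isomorphic, as a pair, to some fibre $(\mathcal{X}_t,\mathcal{B}_t)$ via an isomorphism $\phi\colon \mathcal{X}_t\to X$. Exactly as in the opening of the proof of Lemma~\ref{bounded volume}, I would discard the closed locus of $T$ over which the fibres fail to give klt pairs, restrict to the open subset where $-(K_{\mathcal{X}_t}+\mathcal{B}_t)$ is ample, and then run a Noetherian induction: since this stratifies $T$ into finitely many pieces, it suffices to produce a bound $M$ valid on one stratum at a time. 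On a fixed stratum I would arrange, shrinking $T$ further if necessary, that $\mathcal{X}\to T$ is a projective smooth morphism of relative dimension $d$ with normal fibres, that $\mathcal{B}$ is flat over $T$, and that there is a relatively very ample Cartier divisor $\mathcal{A}$ on $\mathcal{X}$ over $T$.

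Now fix $t$ in such a stratum and let $H_1,\dots,H_{d-1}\in|\mathcal{A}_t|$ be general members. By Bertini (the fibres being smooth) $C_t:=H_1\cap\cdots\cap H_{d-1}$ is an irreducible curve on $\mathcal{X}_t$, and it is an ample curve because it is a complete intersection of very ample divisors; moreover $D\cdot C_t=D\cdot\mathcal{A}_t^{\,d-1}$ for every $\qq$-Cartier divisor $D$ on $\mathcal{X}_t$. Setting $C:=\phi(C_t)$, this is again an irreducible ample curve on $X$ (since $\phi$ is an isomorphism, $\phi_*\mathcal{A}_t$ is very ample and $C$ is a general complete intersection of members of $|\phi_*\mathcal{A}_t|$), and because $\phi$ induces an isomorphism of pairs we have $\phi_*\bigl(-(K_{\mathcal{X}_t}+\mathcal{B}_t)\bigr)=-(K_X+\Delta)$. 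Hence
\[
-(K_X+\Delta)\cdot C \;=\; -(K_{\mathcal{X}_t}+\mathcal{B}_t)\cdot \mathcal{A}_t^{\,d-1}.
\]

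It remains to bound the right-hand side uniformly in $t$. Picking $m>0$ with $m(K_{\mathcal{X}/T}+\mathcal{B})$ Cartier, the map $t\mapsto\bigl(-m(K_{\mathcal{X}_t}+\mathcal{B}_t)\bigr)\cdot\mathcal{A}_t^{\,d-1}$ is an intersection number of line bundles on the fibres of a flat projective morphism, hence locally constant on $T$; so $-(K_{\mathcal{X}_t}+\mathcal{B}_t)\cdot\mathcal{A}_t^{\,d-1}$ takes only finitely many values on the stratum, and the maximum over the finitely many strata is the sought $M$. The one step I expect to require genuine care is this reduction: arranging, through Noetherian induction and generic flatness, that a relatively very ample $\mathcal{A}$ exists and that $\mathcal{B}$ is flat, so that the relevant intersection numbers are locally constant — once this is set up, both the curve $C$ and the bound $M$ are immediate. (Alternatively one could try to deduce $M$ from Lemma~\ref{bounded volume}, but going through a complete-intersection curve as above seems the most direct route.)
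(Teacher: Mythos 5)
Your argument is correct, but it takes a genuinely different route from the paper's. The paper chooses the curve $C$ adapted to the anticanonical polarization itself: using strict log boundedness it finds a single $m$ (depending only on $\mathcal{P}$) such that $|-m(K_X+\Delta)|$ is base point free for every member of the family, takes $C$ to be a general complete intersection in that system, and then bounds $-(K_X+\Delta)\cdot C = m^{d-1}\bigl(-(K_X+\Delta)\bigr)^{d}$ by invoking Lemma~\ref{bounded volume}. You instead slice by a relatively very ample divisor $\mathcal{A}$ coming from the bounding family $\mathcal{X}\to T$ and bound $-(K_{\mathcal{X}_t}+\mathcal{B}_t)\cdot\mathcal{A}_t^{\,d-1}$ by local constancy of intersection numbers in flat families after stratifying $T$. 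Both are sound. What the paper's route buys is brevity: it outsources all the family-theoretic bookkeeping to the already-proved Lemma~\ref{bounded volume} and to a uniform effective base-point-freeness statement. What your route buys is independence from Lemma~\ref{bounded volume} and from any freeness of $|-m(K_X+\Delta)|$; the price is that you must genuinely carry out the reduction you flag at the end — arranging, stratum by stratum, that $\mathcal{B}$ is flat and that some $m(K_{\mathcal{X}/T}+\mathcal{B})$ is Cartier (the latter is where strict log boundedness and boundedness of the Cartier index, as in \cite[Lemma 2.25]{Bir16a}, enter). One further remark: you correctly use that in a \emph{strictly} log bounded family the isomorphism $\phi$ identifies $\mathcal{B}_t$ with $\phi_*^{-1}\Delta$ exactly, so that $\phi_*\bigl(K_{\mathcal{X}_t}+\mathcal{B}_t\bigr)=K_X+\Delta$; with mere log boundedness one would only have $\phi_*^{-1}\Delta\leq\mathcal{B}_t$ and the displayed equality of intersection numbers would fail, so this hypothesis is doing real work in your proof just as it does in the paper's.
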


\begin{proof}
Since the family of pairs is strictly log bounded, we may find a positive natural number $m$, only depending on $\mathcal{P}$, 
so that $|-m(K_X+\Delta)|$ is a base point free linear system for any $(X,\Delta)$ as in the statement.
Hence, a general curve $C$ on the rational equivalence class of $(-m(K_X+\Delta))^{d-1}$ will be
an ample curve. Moreover, by Lemma~\ref{bounded volume}, we have that 
\[
-(K_X+\Delta) \cdot C = m^{d-1} (-(K_X+\Delta))^{d} = m^{d-1} \vol(-(K_X+\Delta)) < m^{d-1}v_2.
\]
Thus, it suffices to take $M=m^{d-1}v_2$.
\end{proof}

\begin{lemma}\label{bound of ld for Gorenstein}
Let $d$ be a positive integer and $\epsilon$ a positive real number.
There exists a constant $l$ only depending on $d$ and $\epsilon$ satisfying the following.
Let $X$ be a $d$-dimensional klt variety so that $K_X$ is Cartier at $x\in X$.
Assume that $X$ admits an $\epsilon$-plt blow-up at $x\in X$ extracting a divisor $E$. 
Then we have that
\[
a_E(X,0) \leq l.
\]
In particular, there are finitely many possible values for $a_E(X,0)$.
\end{lemma}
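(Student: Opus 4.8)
The plan is to reduce the boundedness of $a_E(X,0)$ to the boundedness already available for the Fano-type varieties $E$ that arise as Koll\'ar components of an $\epsilon$-plt blow-up. First I would record the structure theory of the $\epsilon$-plt blow-up $\pi\colon Y\to X$ extracting $E$: by adjunction we obtain a log Fano pair $(E,\Delta_E)$ with $K_E+\Delta_E=(K_Y+E)|_E$ (here $\Delta=0$, so the strict transform $\Delta_Y$ is $0$), the pair $(Y,E)$ is plt and $\epsilon$-plt, and $-E$ is ample over $X$. By adjunction~\cite{Hac14} and boundedness of $\epsilon$-log canonical Fano varieties~\cite{Bir16b}, the pairs $(E,\Delta_E)$ range in a log bounded family depending only on $d$ and $\epsilon$; by Lemma~\ref{from lb to slb} (the different $\Delta_E$ has coefficients in a fixed finite set coming from $\mathcal{S}$) this family is in fact strictly log bounded, so Lemma~\ref{bounded volume} and Lemma~\ref{log bounded curve} apply: there is $M=M(d,\epsilon)$ and, for each such $E$, an ample curve $C\subset E$ with $-(K_E+\Delta_E)\cdot C\le M$.

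The next step is to convert this into a statement about the discrepancy $a=a_E(X,0)$. Write $K_Y+(1-a)E=\pi^*K_X$ near $E$, so that $(K_Y+E)|_E = \bigl((1-(1-a))E + \pi^*K_X\bigr)|_E$, i.e. $K_E+\Delta_E = a\,(E|_E) + (\pi^*K_X)|_E$. Intersecting with the ample curve $C$ and using $(\pi^*K_X)\cdot C = 0$ (because $K_X$ is Cartier at $x$ and $C$ lies in the fiber $E$ over $x$, so $\pi^*K_X|_E$ is the pullback of a Cartier divisor from a point and is numerically trivial on $E$), we get
\[
-(K_E+\Delta_E)\cdot C = -a\,(E|_E)\cdot C = a\,(-E|_E)\cdot C .
\]
Since $-E$ is $\pi$-ample and $C$ is a curve contracted by $\pi$, the number $(-E|_E)\cdot C=(-E)\cdot C$ is a positive integer (it is the intersection of a curve with a Cartier divisor? — here one must be careful: $E$ need not be Cartier on $Y$; but $-E$ being $\pi$-ample it is at least $\qq$-Cartier and one can instead take $C$ general in $|-m(K_E+\Delta_E)|^{\,d-2}$ restricted appropriately so that $(-E)\cdot C\ge \tfrac1N$ for a bounded $N$). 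Granting a positive lower bound $(-E|_E)\cdot C\ge c_0=c_0(d,\epsilon)>0$, we conclude $a\cdot c_0 \le -(K_E+\Delta_E)\cdot C\le M$, hence $a_E(X,0)\le M/c_0=:l$, depending only on $d$ and $\epsilon$. Finiteness of the possible values then follows because $a_E(X,0)$ lies in a discrete set: $K_X$ Cartier at $x$ forces $a_E(X,0)\in\tfrac1{r}\zz$ for bounded $r$ — in fact, since $X$ is Gorenstein at $x$, $a_E(X,0)$ is an integer, so $a_E(X,0)\in\{0,1,\dots,\lfloor l\rfloor\}$.

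The main obstacle is controlling the intersection number $(-E|_E)\cdot C$ from below uniformly: $E$ itself is typically not Cartier on $Y$, only $\qq$-Cartier, so $(-E)\cdot C$ need not be an integer, and an a priori lower bound requires either a uniform bound on the Cartier (or $\qq$-Gorenstein) index of $(Y,E)$ along $E$ — which one can hope to extract from the $\epsilon$-plt hypothesis together with boundedness of $E$, cf.\ the circle of ideas behind Theorem~\ref{bounded cartier index e-plt} — or a more careful choice of the test curve $C$ inside the base-point-free system $|-m(K_E+\Delta_E)|$ so that its class pairs with the $\qq$-Cartier divisor $-E|_E$ in a bounded way. I expect the cleanest route is: use that $|-m(K_E+\Delta_E)|$ is basepoint free for a bounded $m$ (strict log boundedness), pick $C$ a complete intersection of $d-1$ general members, note $C$ is contracted by $\pi$ hence $(\pi^*K_X)\cdot C=0$, and then argue that $(-mE|_E)\cdot C$ is a positive integer because $mE|_E$ can be arranged to be Cartier along the (bounded) $E$ after possibly enlarging $m$; this gives $a\le M m/1$, a bound of the desired shape. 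Everything else — adjunction, boundedness, the volume and curve lemmas, and the final discreteness of $a_E(X,0)$ — is assembled from results already in place.
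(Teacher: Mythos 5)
Your overall architecture is the same as the paper's: adjunction to $E$, boundedness of $\epsilon$-lc log Fanos, upgrading to a strictly log bounded family via Lemma~\ref{from lb to slb}, choosing an ample curve $C$ with $-(K_E+\Delta_E)\cdot C\le M$ via Lemmas~\ref{bounded volume} and~\ref{log bounded curve}, and then reading off the discrepancy from $\pi^*K_X\cdot C=0$. However, the one step you explicitly flag as ``the main obstacle'' --- a uniform positive lower bound on $(-E)\cdot C$ --- is precisely the heart of the lemma, and you do not close it. Your two proposed escapes are not yet proofs: the suggestion that $mE|_E$ ``can be arranged to be Cartier along the (bounded) $E$ after possibly enlarging $m$'' does not follow from boundedness of the underlying varieties $E$ alone, since within a bounded family the local class groups (hence Cartier indices of arbitrary Weil divisors) at singular points need not be bounded without some control on the singularities; and choosing $C$ more cleverly inside $|-m(K_E+\Delta_E)|$ does not by itself make the pairing with a merely $\qq$-Cartier divisor bounded away from $0$.

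The missing ingredient, as in the paper, is \cite[Proposition 3.9]{Sho92}: because $(Y,E)$ is $\epsilon$-plt, at every codimension-$2$ point of $Y$ the Cartier index of \emph{any} Weil divisor is bounded by a constant $p=p(\epsilon)$ (this is a statement about $\epsilon$-lc surface germs, i.e.\ about the ambient $Y$ along $E$, not about $E$ as an abstract bounded variety). Hence $pE$ is Cartier outside a closed subset $Z\subset Y$ of codimension at least $3$; moving $C$ in its rational equivalence class so that it avoids $Z$ gives $-E\cdot C\in\frac{1}{p}\zz_{>0}$, i.e.\ $-E\cdot C\ge 1/p$, which is exactly the lower bound $c_0$ you needed. (Combined with the bounded Cartier index of $-(K_E+\Delta_E)$ on the strictly log bounded family, this even puts $-(K_E+\Delta_E)\cdot C$ in a finite set.) Two minor slips besides this: your formula $K_Y+(1-a)E=\pi^*K_X$ with $a=a_E(X,0)$ should be $K_Y-aE=\pi^*K_X$, so the quantity appearing in the intersection computation is the log discrepancy $a+1$ rather than $a$ (harmless, it shifts $l$ by one); and the parenthetical claim that $(-E)\cdot C$ is a positive integer because $-E$ is $\pi$-ample is, as you yourself note, unjustified without the Cartier index control above.
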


\begin{proof}
Let $\pi\colon Y \rightarrow X$ be the $\epsilon$-plt blow-up at $x\in X$ and write 
\[
K_Y-a_E(X,0)E = \pi^*K_X.
\]
By assumption of $X$ being klt and $K_X$ being Cartier at $x$ we know that $a_E(X,0)$ is a non-negative integer.
We write
\[
K_E+\Delta_E = ( K_Y+E)|_E.
\]
Thus, the pair $(E,\Delta_E)$ is $\epsilon$-lc and $-(K_E+\Delta_E)$ is ample.
By boundedness of Fano varieties~\cite[Theorem 1.1]{Bir16b} we conclude that the projective varieties $E$ belong to a bounded family.
By~\cite[Theorem 3.34]{HK10} we know that the coefficients of $\Delta_E$ are standard, therefore the pairs $(E,\Delta_E)$ are log bounded by Lemma~\ref{from bounded to log bounded}.
Moreover, since $(E,\Delta_E)$ is $\epsilon$-log canonical then the coefficients of $\Delta_E$ are at most $1-\epsilon$, hence belong to a finite set of rational numbers.
By Lemma~\ref{from lb to slb}, conclude that the pairs $(E,\Delta_E)$ belong to a strictly log bounded family $\mathcal{P}$, which only depends on the dimension $d$ and the positive real number $\epsilon$.

By~\cite[Proposition 3.9]{Sho92}, we know that at codimension $2$ points of $Y$, every Weil divisor has Cartier index bounded by $p$, for some constant $p$ which only depends on $\epsilon$.
Hence, there exists a closed subset $Z$ on $Y$ of codimension at least $3$, so that the Cartier index of $E$ is a divisor of $p$ outside $Z$.
By Lemma~\ref{log bounded curve}, we may find an ample curve $C$ so that
\[
-(K_E+\Delta_E) \cdot C \leq M,
\]
for some constant $M$ which only depends on $\mathcal{P}$.
Thus, up to replacing $C$ with a rationally equivalent curve we may assume that $C$ does not intersect $Z$, so we have $p E \cdot C$ is a negative integer,
or equivalently, 
\[
- E \cdot C \in \mathbb{Z}_{>0}\left[ \frac{1}{p} \right].
\]
Moreover, since $(E,\Delta_E)$ belongs to a strictly log bounded family we conclude that the Cartier index of $-(K_E+\Delta_E)$ 
is bounded by a constant which only depends on $\mathcal{P}$~\cite[Lemma 2.25]{Bir16a}. In particular, 
\[
-(K_Y+E) \cdot C = 
-(K_E+\Delta_E)\cdot C 
\]
belongs to a finite set $\mathcal{F}$ of positive rational numbers,
which only depends on $\mathcal{P}$. 
Finally, observe that from the relation
\[
(K_Y+E  -(a_E(X,0)+1) E ) \cdot C = \pi^*(K_X)\cdot C =0,
\]
we conclude that 
\[
\frac{(a_E(X,0)+1)}{p} \leq (a_E(X,0)+1) (- E\cdot C) \in \mathcal{F},
\]
so
\[
a_E(X,0) \leq p \max\{\mathcal{F}\}-1,
\]
where the right hand side only depends on $\epsilon$ and $\mathcal{F}$.
Since $\mathcal{F}$ only depends on $\mathcal{P}$, and $\mathcal{P}$ only depends on $d$ and $\epsilon$,
we deduce that $l=p \max\{ \mathcal{F}\}-1$ only depends on $d$ and $\epsilon$. This proves the first statement.
Since $a_E(X,0)$ is a non-negative integer, we conclude that there are finitely many possible values for it,
proving the second statement.
\end{proof}

To conclude this subsection, we will prove that the mildest component of a klt singularity is indeed attained by a Koll\'ar component, or equivalently,
the infimum in Definition~\ref{mKc} is a minimum.

\begin{proposition}\label{mildest Kollar component is a minimum}
Let $(X,\Delta)$ be a klt singularity so that the coefficients of $\Delta$ are rational numbers and $x\in X$.
There exists a Koll\'ar component $\pi \colon Y \rightarrow X$ extracting a divisor $E\subset Y$ so that 
\[
{\rm \mathcal{M}\mathcal{C}}_x(X,\Delta) = a(Y,\Delta_Y+E).
\]
\end{proposition}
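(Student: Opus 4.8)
The plan is to show that the supremum defining $\mathcal{M}\mathcal{C}_x(X,\Delta)$ in Definition~\ref{mKc} is attained. Write $m:=\mathcal{M}\mathcal{C}_x(X,\Delta)$. First I would observe that $m>0$: by Proposition~\ref{e-plt blow-up} there is an $\epsilon_0$-plt blow-up of $(X,\Delta)$ at $x$ for some $\epsilon_0>0$, and its exceptional divisor produces a Koll\'ar component $(E_0,\Delta_{E_0})$ which is a log Fano pair, in particular klt, so that $a(E_0,\Delta_{E_0})+1>0$ and therefore $m\geq a(E_0,\Delta_{E_0})+1>0$.

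The core of the argument is to place the near-optimal Koll\'ar components in a bounded family. Fix $\epsilon:=m/2>0$ and let $\mathcal{K}$ denote the set of Koll\'ar components $(E,\Delta_E)$ over $x\in X$ with $a(E,\Delta_E)+1>\epsilon$; by definition of the supremum, $\mathcal{K}\neq\emptyset$ and $\sup\{a(E,\Delta_E)+1\mid (E,\Delta_E)\in\mathcal{K}\}=m$. Each $(E,\Delta_E)\in\mathcal{K}$ is a $(d-1)$-dimensional log Fano pair which is $\epsilon$-log canonical, since the bound on the total discrepancy forces every log discrepancy of $(E,\Delta_E)$ to exceed $\epsilon$ while $-(K_E+\Delta_E)$ is ample. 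By the boundedness of Fano varieties~\cite[Theorem 1.1]{Bir16b} the varieties $E$ lie in a bounded family. For the boundary, the coefficients of $\Delta$ form a finite set of rational numbers, so by the structure of the different the coefficients of $\Delta_E$ lie in a hyperstandard-type set (in the sense of Definition~\ref{def:standard coeff}) depending only on ${\rm coeff}(\Delta)$ and $d$ whose only accumulation point is $1$; since $(E,\Delta_E)$ is $\epsilon$-log canonical these coefficients are at most $1-\epsilon$, and hence lie in a finite set. As $-(K_E+\Delta_E)$ is ample, Lemma~\ref{from bounded to log bounded} shows that the pairs $(E,\Delta_E)$ form a log bounded family, and Lemma~\ref{from lb to slb} then shows that they form a strictly log bounded family $\mathcal{P}$.

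Next I would show that the total discrepancy takes only finitely many values on $\mathcal{P}$. Stratifying the base of $\mathcal{P}$, one reduces to the situation of a simultaneous log resolution over each stratum; along such a stratum the coefficients of the strict transform of the boundary are constant because $\mathcal{P}$ is \emph{strictly} log bounded, and the coefficients of the exceptional part of the log pull-back are determined by the fixed resolution, so by \cite[Lemma 2.29]{KM98} the total discrepancy is constant along the stratum. Hence the set $\{a(E,\Delta_E)\mid (E,\Delta_E)\in\mathcal{P}\}$ is finite. Now choosing $(E_i,\Delta_{E_i})\in\mathcal{K}$ with $a(E_i,\Delta_{E_i})+1\to m$, these values lie in a finite set, so $a(E_{i_0},\Delta_{E_{i_0}})+1=m$ for some index $i_0$. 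The plt blow-up $\pi\colon Y\to X$ producing the Koll\'ar component $(E_{i_0},\Delta_{E_{i_0}})$, together with $E:=E_{i_0}$, then realizes the supremum in Definition~\ref{mKc}, which is the assertion of the proposition.

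I expect the main obstacle to be the last step: showing that the total discrepancy takes only finitely many values over a strictly log bounded family. Since the total discrepancy is defined as an infimum over \emph{all} divisorial valuations, one must argue that it is nevertheless read off a single fibrewise log resolution and varies in a constructible manner, which requires the usual care with simultaneous log resolutions and Noetherian induction on the base. A secondary technical point, needed because $\Delta$ is only assumed to have rational coefficients rather than standard ones, is the control of the coefficients of the differents $\Delta_{E_i}$: here one uses that a finitely generated sub-semigroup of $\qq_{\geq 0}$ is discrete, so that for a plt pair the possible different coefficients accumulate only at $1$, which is precisely where the $\epsilon$-log canonical hypothesis is used to force finiteness.
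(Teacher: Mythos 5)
Your proposal is correct and follows essentially the same route as the paper: the paper argues by contradiction with an infinite increasing sequence of total discrepancies, notes (implicitly, via the increasing sequence; you make it explicit with $\epsilon=m/2$) that the near-optimal Koll\'ar components are $\epsilon$-lc log Fano pairs with coefficients in a finite set, places them in a strictly log bounded family via the same two lemmas, and concludes from the fact that the total discrepancy takes only finitely many values on such a family. The only difference is that you spell out the stratification/simultaneous-log-resolution argument for that last finiteness claim, which the paper simply asserts.
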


\begin{proof}
We proceed by contradiction.
Assume we have a sequence of Koll\'ar components $\pi_i \colon Y_i \rightarrow X$ over $x\in X$ for the pair $(X,\Delta)$
so that the total discrepancies of the pairs $(E_i, \Delta_{E_i})$ are in an infinite increasing sequence.
Since the coefficients of $\Delta_{Y_i}$ are fixed by ${\rm coeff}(\Delta)$, we conclude that the coefficients of
$\Delta_{E_i}$ belong to a finite set of rational numbers which only depend on ${\rm coeff}(\Delta)$~\cite[Lemma 5.3]{FM18}.
By~\cite[Theorem 1.1]{Bir16a}, Lemma~\ref{from bounded to log bounded}, and Lemma~\ref{from lb to slb} we conclude that the log pairs $(E_i,\Delta_{E_i})$ belong to a strictly log bounded family.
This contradicts the fact that the total log discrepancy can take only finitely many values on bounded families.
\end{proof}

\subsection{Finite morphisms}

In this subsection, we recall the index one cover of a log canonical singularity and the behaviour of log discrepancies under finite dominant morphisms.

\begin{definition}{\em
Let $(X,\Delta)$ be a pair and write $\Delta=\sum_i d_i \Delta_i$ where the $\Delta_i$'s are pairwise different prime divisors on $X$.
Given a quasi-finite morphism $\phi\colon X'\rightarrow X$ between normal varieties, we can write
\[
\phi^*(K_X+\Delta) = K_{X'} + \Delta',
\]
where
\[
\Delta':=\sum_i \sum_{f(E_j)=\Delta_i} (d_i(r_j+1)-r_j)E_j
\]
and $r_j$ is the ramification index at the generic point of $E_j$~\cite[2.1]{Sho92}.
The above formula is called the {\em pull-back formula} for quasi-finite morphisms.}
\end{definition}

The following lemma follows from the pull-back formula for quasi-finite morphisms.

\begin{lemma}\label{delta prime effective}
Let $(X,\Delta)$ be a  pair with standard coefficients one and
$\phi \colon X'\rightarrow X$ be a finite morphism of normal varieties.
Assume that for every prime divisor $E$ on $X'$ we have that $r_E+1$ divides $(1-d_i)^{-1}$,
where $r_E$ is the ramification index of $\phi$ at $E$ and $d_i$ is the coefficient of $\phi(E)$ at $\Delta$.
Then $\Delta'$ is an effective divisor whose coefficients are standard.
\end{lemma}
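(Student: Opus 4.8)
The plan is to apply the pull-back formula for quasi-finite morphisms directly to each prime divisor appearing over a component $\Delta_i$ of $\Delta$, and check that the resulting coefficient lies in the set $\mathcal{S}$ of standard coefficients. Since the coefficients of $\Delta$ are standard, each $d_i$ has the form $d_i = 1 - \tfrac{1}{m_i}$ for some positive integer $m_i$ (or $d_i=1$, the case $m_i=\infty$, which I would treat separately — there $d_i(r_j+1)-r_j = 1$ regardless of $r_j$, so the component stays reduced and contributes a standard coefficient). For a prime divisor $E_j$ on $X'$ with $\phi(E_j) = \Delta_i$ and ramification index $r_j$, the pull-back formula gives
\[
\mathrm{coeff}_{E_j}(\Delta') = d_i(r_j+1) - r_j = 1 - (1-d_i)(r_j+1) = 1 - \frac{r_j+1}{m_i}.
\]

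First I would observe that the hypothesis — namely that $r_E+1$ divides $(1-d_i)^{-1} = m_i$ for every prime divisor $E$ over $X'$ with $\phi(E)$ having coefficient $d_i$ in $\Delta$ — says exactly that $\tfrac{m_i}{r_j+1}$ is a positive integer, call it $m_j'$. Substituting, the coefficient becomes $1 - \tfrac{1}{m_j'}$, which is visibly a standard coefficient, i.e.\ lies in $\mathcal{S} = \mathcal{H}(\{1\})$. This simultaneously shows the coefficient is nonnegative (since $m_j' \ge 1$), so $\Delta'$ is effective, and that it is standard. For prime divisors $E_j$ on $X'$ whose image under $\phi$ is \emph{not} one of the $\Delta_i$ (i.e.\ not contained in the support of $\Delta$), the pull-back formula assigns coefficient $0$, which is consistent with effectiveness and standardness and needs no further comment. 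I should also note the edge case where the stated divisibility forces $r_j+1 \le m_i$, hence $r_j \le m_i - 1$, which is automatically compatible; no sign issue arises.

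There is essentially no serious obstacle here: the lemma is a bookkeeping consequence of the definition just recalled, and the only thing to be careful about is unwinding the divisibility hypothesis correctly into the statement "$m_i/(r_j+1) \in \mathbb{Z}_{>0}$" and then recognizing $1 - \tfrac{r_j+1}{m_i}$ as an element of $\mathcal{H}(\{1\})$. The mildly delicate point — the one I would state explicitly rather than leave to the reader — is the treatment of the coefficient-one components of $\Delta$ (where $(1-d_i)^{-1}$ is not a finite integer and the divisibility condition must be read as vacuous or automatically satisfied): there the formula $d_i(r_j+1)-r_j$ collapses to $1$, so those components pull back to reduced divisors and cause no trouble. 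Assembling these cases shows every coefficient of $\Delta'$ is of the form $1 - 1/k$ with $k \in \mathbb{Z}_{>0}$ (or $0$), completing the proof.
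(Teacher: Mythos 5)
Your computation is exactly the verification the paper leaves implicit when it says the lemma ``follows from the pull-back formula for quasi-finite morphisms'': writing $d_i = 1 - \tfrac{1}{m_i}$, the coefficient $d_i(r_j+1)-r_j = 1 - \tfrac{r_j+1}{m_i}$ becomes $1-\tfrac{1}{m_j'}$ under the divisibility hypothesis, which is standard and nonnegative. Your treatment of the coefficient-one components and of divisors not lying over $\supp(\Delta)$ is also correct, so the proposal is complete and matches the paper's intended argument.
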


The following is a theorem of Zariski that is often used instead of resolution of singularities~\cite{Zar39}.

\begin{theorem}\label{birational extraction}
Let $Y'$ and $X$ be two integral schemes of finite type over a field over $\mathbb{Z}$,
and $f \colon Y'\rightarrow X$ a dominant morphism.
Let $D\subset Y'$ be a prime divisor and $\eta\in D$ the generic point.
Assume that $Y'$ is normal at $\eta$. We can define a sequence of schemes and rational maps as follows:
\begin{enumerate}
\item $X_0=X$ and $f_0=f$, 
\item If $f_i \colon Y' \dashrightarrow X_i$ is defined, then let $Z_i\subset X_i$ the closure of $f_i(\eta)$.
We define $X_{i+1}$ to be the blow-up of $X_i$ at $Z_i$ and $f_{i+1}\colon Y' \dashrightarrow X_{i+1}$ the induced rational map.
\end{enumerate}
For $j$ large enough $\dim(Z_j)\geq \dim(X)-1$ and $X_j$ is regular at the generic point of $Z_j$.
\end{theorem}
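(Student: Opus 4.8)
Here is the plan. I would translate the statement into a question about one discrete valuation and then track its centers along the tower. Since $Y'$ is normal at $\eta$ and $\eta$ is the generic point of the prime divisor $D$, the local ring $\mathcal{O}_{Y',\eta}$ is a discrete valuation ring; let $v$ be the associated rank-one discrete valuation on $K(Y')$ and set $\bar v:=v|_{K(X)}$ (meaningful since $f$ dominant gives $K(X)\hookrightarrow K(Y')$). By construction $f_i(\eta)$ is the center of $\bar v$ on $X_i$, so each $Z_i$ is the center of $\bar v$ on $X_i$, and these centers are compatible under the blow-down morphisms; hence $R_i:=\mathcal{O}_{X_i,Z_i}$ is a Noetherian local subring of $K(X)$ dominated by the valuation ring of $\bar v$, with $R_0\subseteq R_1\subseteq R_2\subseteq\cdots$. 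In this language the theorem asserts that the chain $(R_i)$ is eventually a regular local ring of dimension at most one, i.e. eventually a field or a discrete valuation ring. If $\bar v$ is trivial, every nonzero element of $K(X)$ is a unit of $\mathcal{O}_{Y',\eta}$, so $f(\eta)$ is the generic point of $X$, $Z_0=X$, and $X_0=X$ is regular at its generic point: $j=0$ works. So assume $\bar v$ nontrivial; its value group is a nonzero subgroup of $\mathbb{Z}$, so $\bar v$ is a rank-one discrete valuation with valuation ring a DVR $V$ having fraction field $K(X)$.

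Next I would show that $\bar v$ is divisorial over $X$, and in fact that $V$ is essentially of finite type over $R_0$. Abhyankar's inequality applied to the extension $K(X)\subseteq K(Y')$ together with $v$ — whose residue field $k(D)$ has transcendence degree $\dim Y'-1$ over the base field, and whose value group has the same rational rank as that of $\bar v$ — shows that the residue field of $\bar v$ has transcendence degree at least $\dim X-1$; comparing with the Abhyankar inequality for $\bar v$ on $K(X)$ it equals $\dim X-1$, so $\bar v$ is divisorial. Hence there is a projective birational morphism $X''\to X$ carrying a prime divisor $E''\subseteq X''$ with $\mathcal{O}_{X'',E''}=V$, and $E''$ necessarily maps onto $Z_0$ (the center of $\bar v$ on $X$); since $X''\to X$ is of finite type, $V=\mathcal{O}_{X'',E''}$ is essentially of finite type over $\mathcal{O}_{X,Z_0}=R_0$.

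Then I would extract monotonicity together with an approximation statement. Picking a generator $x_i$ of $\mathfrak{m}_{R_i}$ with $\bar v(x_i)$ minimal, every generator $y$ of $\mathfrak{m}_{R_i}$ has $\bar v(y/x_i)\ge 0$, so $Z_{i+1}$ lies in the chart $\operatorname{Spec} R_i[\mathfrak{m}_{R_i}/x_i]$ of the blow-up; thus $R_i[\mathfrak{m}_{R_i}/x_i]\subseteq R_{i+1}$ and $\mathfrak{m}_{R_i}R_{i+1}=x_iR_{i+1}$. In particular $x_i\in\mathfrak{m}_{R_{i+1}}$, so the minimal $\bar v$-values on the $\mathfrak{m}_{R_i}$ are non-increasing. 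Using $\mathfrak{m}_{R_i}R_{i+1}=x_iR_{i+1}$, for $f\in V$ written as $f=g/h$ with $g,h\in R_0$ one successively cancels the factors $x_i$: at each step both $g$ and $h$ lie in the current maximal ideal (their $\bar v$-values are positive, using $\bar v(f)\ge 0$), so both are divisible by $x_i$ in the next ring, and $\bar v$ of the denominator strictly decreases while staying nonnegative; after finitely many steps the denominator is a unit and $f$ lands in some $R_m$. Hence $\bigcup_i R_i=V$.

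The last step is termination, and this is the part I expect to be the main obstacle: upgrading $\bigcup_i R_i=V$ (which only says $V$ is reached in the limit) to $R_N=V$ for some finite $N$. Here the finite-type-ness from the second step is essential: writing $V$ as a localization of a finitely generated $R_0$-subalgebra $R_0[a_1,\dots,a_m]$ of $K(X)$, the $a_j$ all lie in $R_N$ for $N\gg 0$ by the approximation statement, while every element that is inverted has zero $\bar v$-value and hence is already a unit of the local ring $R_N$; therefore $V\subseteq R_N$, so $R_N=V$. Then $Z_N$ is the prime divisor on $X_N$ with local ring $V$, so $\dim Z_N=\dim X-1$ and $X_N$ is regular at the generic point of $Z_N$, and the same holds for every $j\ge N$ since once $R_i$ is a DVR the subsequent blow-ups are isomorphisms near the center. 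The subtlety I would watch most carefully is precisely this finiteness: it is where one must use that $X$ is of finite type — so that $\bar v$ is divisorial and $V$ is essentially of finite type over $R_0$ — rather than merely that $V$ is Noetherian. This recovers Zariski's theorem.
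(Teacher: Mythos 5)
Your proof is correct. Note first that the paper does not actually prove this statement: it is quoted as Zariski's theorem with a citation to \cite{Zar39} (it is also \cite[Lemma~2.45]{KM98}, where essentially your argument appears), so there is no in-paper proof to compare against. Your route is the standard one and all the key points are present: identifying $Z_i$ with the center of the restricted valuation $\bar v$ (which implicitly uses the valuative criterion of properness for the blow-ups, since $\mathcal{O}_{Y',\eta}$ is a DVR --- worth saying explicitly, as it is the reason $f_i$ is defined at $\eta$ at all); the chart computation giving $\mathfrak{m}_{R_i}R_{i+1}=x_iR_{i+1}$ and hence $\bigcup_i R_i=V$ by the cancellation argument; and, crucially, the finiteness step. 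You are right that the whole weight of the theorem sits in that last step, and your mechanism for it is sound: the additivity of Abhyankar's inequality along $K(X)\subseteq K(Y')$ forces $\bar v$ (if nontrivial) to be an Abhyankar, hence divisorial, valuation of $K(X)$, so $V=\mathcal{O}_{X'',E''}$ is essentially of finite type over $R_0$, and a localization of a finitely generated algebra is swallowed by some $R_N$ once the finitely many generators are, since the inverted elements have value zero and are therefore already units in the local ring $R_N$. Two minor caveats: the degenerate case $\bar v$ trivial makes the ``blow-up of $X$ along $X$'' ill-posed, but as you note the conclusion already holds at $j=0$ there; and your transcendence-degree bookkeeping is written for the finite-type-over-a-field case, whereas the statement also allows finite type over $\mathbb{Z}$, where one should replace $\operatorname{trdeg}$ by the dimension formula for excellent schemes --- irrelevant for this paper, which only uses the theorem over $\mathbb{C}$.
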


\begin{lemma}\label{finite log canonical}
With the assumptions of Lemma~\ref{delta prime effective}.
The following conditions hold:
\begin{enumerate}
\item For $\epsilon$ a non-negative real number the pair $(X',\Delta')$ is $\epsilon$-log canonical if and only if $(X,\Delta)$ is $\epsilon$-log canonical, and
\item if $(X,\Delta)$ is a log canonical pair with a unique log canonical center $x\in X$, $(X,\Delta)$ has a unique log canonical place, and $x'=\phi^{-1}(x)$ is a point, then $(X',\Delta')$ has a unique log canonical place.
\end{enumerate}
\end{lemma}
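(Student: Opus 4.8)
The plan is to reduce both parts to the pull-back formula for quasi-finite morphisms, upgraded so that it applies to divisors \emph{over} $X$ and $X'$ rather than only on them. Given a prime divisor $E'$ over $X'$, its divisorial valuation restricts on $K(X)\subseteq K(X')$ to $r_{E'}\,v_E$ for a prime divisor $E$ over $X$ and a ramification index $r_{E'}\in\zz_{>0}$; conversely every prime divisor over $X$ admits such an extension, since a finite extension of function fields cannot destroy a divisorial valuation. Using Theorem~\ref{birational extraction} I would realize $E$ as an honest divisor on a model $g\colon X_j\to X$ obtained by a sequence of blow-ups, pass to the (finite) normalized base change $X_j'\to X_j$, realize $E'$ on a further model of $X_j'$, and apply the pull-back formula there together with the crepant identity $K_{X_j}+\Delta_{X_j}=g^*(K_X+\Delta)$ and its pull-back to $X_j'$. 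The outcome is the comparison
\[
a_{E'}(X',\Delta')+1 \;=\; r_{E'}\bigl(a_E(X,\Delta)+1\bigr),\qquad r_{E'}\in\zz_{>0}.
\]
Statement~(1) then follows from this identity: the implication from $(X,\Delta)$ to $(X',\Delta')$ is immediate because $r_{E'}\ge 1$ and $\epsilon\ge 0$, and for $\epsilon=0$ the equivalence ``$(X',\Delta')$ log canonical $\iff$ $(X,\Delta)$ log canonical'' is read off symmetrically from the formula.

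For~(2) I would first extract the unique log canonical place $E_0$. Note that $\lfloor\Delta\rfloor=0$, for a coefficient-$1$ component of $\Delta$ would be a log canonical place with center a divisor, contradicting that the unique log canonical center is the point $x$. Hence a dlt modification $\pi\colon Y\to X$ of $(X,\Delta)$ has $E_0$ as its only exceptional divisor, $K_Y+\Delta_Y+E_0=\pi^*(K_X+\Delta)$, and $(Y,\Delta_Y+E_0)$ is dlt with irreducible round-down, so it is plt. Let $Y'$ be the normalization of $Y\times_X X'$, with $q\colon Y'\to Y$ finite and $\pi'\colon Y'\to X'$ proper birational. The divisibility hypothesis of Lemma~\ref{delta prime effective} passes to $q$ — the only boundary components of $\Delta_Y+E_0$ are $E_0$, over which no condition is imposed, and strict transforms of components of $\Delta$, whose ramification indices under $q$ coincide with those under $\phi$ — so $K_{Y'}+B=q^*(K_Y+\Delta_Y+E_0)=(\pi')^*(K_{X'}+\Delta')$ with $B$ effective, and by part~(1) the pair $(X',\Delta')$, hence its crepant pull-back $(Y',B)$, is log canonical.

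Now the comparison identity shows that a prime divisor over $X'$ is a log canonical place of $(X',\Delta')$ exactly when it lies over $E_0$, i.e.\ when its center on $Y'$ is the generic point of a component of $q^{-1}(E_0)$; since $E_0$ has coefficient $1$, these components are precisely $\lfloor B\rfloor=q^{-1}(E_0)$. Thus the log canonical places of $(X',\Delta')$ are the irreducible components $E_1',\dots,E_k'$ of $q^{-1}(E_0)$, and it remains to prove $k=1$. First, the components are pairwise disjoint: if $E_i'\cap E_j'\neq\emptyset$ with $i\neq j$, this intersection is a log canonical center of $(Y',B)$ of codimension at least $2$, and its image under the crepant finite morphism $q$ is a log canonical center of the plt pair $(Y,\Delta_Y+E_0)$ of the same dimension; but the only log canonical center of $(Y,\Delta_Y+E_0)$ is $E_0$, of codimension $1$ — a contradiction. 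Second, $\bigcup_i E_i'=q^{-1}(E_0)=q^{-1}(\pi^{-1}(x))=(\pi')^{-1}(\phi^{-1}(x))=(\pi')^{-1}(x')$ is connected, because $\pi'$ is proper and birational, $X'$ is normal, and $x'$ is a single point, by Zariski's main theorem. A nonempty divisor with pairwise disjoint components that is connected has a single component, so $k=1$.

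The step I expect to be the main obstacle is the disjointness of the $E_i'$: it uses the fact that a crepant finite morphism carries log canonical centers to log canonical centers (which comes from the valuation-theoretic translation behind the comparison identity) and that $(Y,\Delta_Y+E_0)$ is genuinely plt rather than merely dlt, together with the verification that $B$ is effective with $\lfloor B\rfloor=q^{-1}(E_0)$ — all of which rest on checking that the divisibility hypothesis of Lemma~\ref{delta prime effective} is preserved under the base change $Y\times_X X'$.
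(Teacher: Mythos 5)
Your treatment of part (1) is essentially the paper's: normalized base change over models of $X$, the pull-back formula for quasi-finite morphisms, and Theorem~\ref{birational extraction} to reach every divisorial valuation on $X'$, yielding $a_{E'}(X',\Delta')+1=r_{E'}\bigl(a_E(X,\Delta)+1\bigr)$. One honest caveat: as you implicitly acknowledge, this identity only gives the implication from $(X,\Delta)$ to $(X',\Delta')$ for all $\epsilon\ge 0$, together with the equivalence at $\epsilon=0$ (and for kltness); the converse implication for $\epsilon\in(0,1)$ does not follow, and is in fact false as literally stated. For example, take $X=\mathbb{A}^2/\tfrac13(1,1)$, $\Delta=0$, and $\phi\colon\mathbb{A}^2\to X$ the quotient map, which is \'etale in codimension one and so satisfies the hypotheses of Lemma~\ref{delta prime effective}: then $(X',\Delta')=(\mathbb{A}^2,0)$ is $1$-lc while $(X,0)$ is only $\tfrac23$-lc. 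The paper's own proof of (1) establishes exactly the same directions you do and no more, and only those directions are used later, so this is a defect of the statement rather than of your argument; but you should say explicitly that the quantitative converse is not being claimed.

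For part (2) you take a genuinely different route. The paper argues by contradiction on a dlt model of $(X',\Delta')$: two or more lc places would force two intersecting components of the round-down, hence (by the fact that intersections of lc centers are unions of lc centers) infinitely many lc places of $(X',\Delta')$, of which only finitely many can lie over the single place $E_0$ of $(X,\Delta)$. You instead argue directly: the lc places of $(X',\Delta')$ are the components of $q^{-1}(E_0)$ on the normalized base change, these are pairwise disjoint because a crepant finite morphism sends lc centers to lc centers and $(Y,\Delta_Y+E_0)$ is plt, and they form a connected set, hence there is one. The disjointness step is correct (and is a nice shortcut past the ``infinitely many places'' count), granting the verification of the divisibility hypothesis for $q$, which you do. The step that does not close as written is the connectedness: you identify $q^{-1}(E_0)$ with the fiber $(\pi')^{-1}(x')$ and invoke Zariski's main theorem, but this needs $\pi^{-1}(x)=E_0$ set-theoretically, i.e.\ that the exceptional locus of your model $Y\to X$ is exactly the irreducible divisor $E_0$. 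The paper's definition of a dlt modification does not guarantee this: when $X$ is not $\qq$-factorial the modification may have small exceptional components over $x$ not contained in $E_0$ (and the definition as written even permits extracting exceptional divisors that are not lc places), in which case $(\pi')^{-1}(x')$ strictly contains $q^{-1}(E_0)$ and its connectedness says nothing about the latter. The fix is the tool the paper itself uses: since $-(K_{Y'}+B)=-(\pi')^*(K_{X'}+\Delta')$ is $\pi'$-nef and $\pi'$-big, the Koll\'ar--Shokurov connectedness theorem [KM98, Theorem 5.48] applied to $\pi'$ gives that $\lfloor B\rfloor=q^{-1}(E_0)$ is connected in a neighborhood of $(\pi')^{-1}(x')$, hence connected. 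With that substitution your argument is complete, and arguably more transparent than the paper's.
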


\begin{proof}
Let $\pi \colon Y \rightarrow X$ be a projective birational morphism and $Y'\rightarrow Y\times_{X} X'$ the normalization of the 
main component of the fiber product, then we have a commutative diagram
\begin{equation}\label{finite-birational}
 \xymatrix{
 Y' \ar[r]^-{\phi_Y}\ar[d]_-{\pi'}
  & Y \ar[d]^-{\pi}\\
 X' \ar[r]^-{ \phi } & X
 }
\end{equation}
and we can write 
\[
{\pi'}^*(K_{X'}+\Delta')=K_{Y'}+\Delta'_{Y'}+E' \text{ and }
\pi^*(K_X+\Delta)=K_Y+\Delta_Y+E. 
\]
Here $\Delta'_{Y'}$ is the strict transform of $\Delta'$ on $Y'$.
By Lemma~\ref{delta prime effective}, we know that $(X',\Delta')$ is a log pair.
Thus, we have the relation
\[
\phi_Y^*(K_Y+\Delta_Y+E) = K_{Y'}+\Delta'_{Y'}+E'.
\]
Therefore, by the pull-back formula for quasi-finite morphisms we have that 
\[
a_{E}(X',\Delta')+1 = r_E(a_E(X,\Delta)+1),
\]
where $r_E$ is the ramification index of $\phi_Y$ at the generic point of $E$.
By Theorem~\ref{birational extraction}, we know that for every divisorial valuation on $X'$
we can find $\pi \colon Y \rightarrow X$ so that the center of this valuation on $Y'$ is a divisor,
where $Y'$ is as in the commutative diagram~\eqref{finite-birational}.
This proves the first statement.

Now we turn to prove the second statement by contradiction.
By the proof of the first statement we know that the log canonical pair $(X',\Delta')$ has at least one log canonical place,
and all its log canonical places map onto $x'\in X'$. Assume that $(X',\Delta')$ has more than one log canonical place.
Let $\pi \colon Y'\rightarrow X'$ be a dlt modification of $(X',\Delta')$ and write 
\[
{\pi'}^*(K_{X'}+\Delta')= K_{Y'}+\Delta'_{Y'}.
\]
Applying~\cite[Theorem 5.48]{KM98} to a log resolution of $(Y',\Delta'_Y)$ we deduce that $\lfloor \Delta'_{Y'} \rfloor$ is connected. 
Moreover, since $(X',\Delta')$ has more than one log canonical place, the divisor 
$\lfloor \Delta'_{Y'}\rfloor$ is connected and has at least two irreducible components.
Since the intersection of two log canonical centers is a union of log canonical centers~\cite[Theorem 1.1]{Amb11}, we deduce that
$(Y',\Delta'_{Y'})$ has infinitely many log canonical places.
Thus, $(X',\Delta')$ has infinitely many log canonical places.
By Theorem~\ref{birational extraction}, we conclude that each of those log canonical places appears 
in a commutative diagram as in~\eqref{finite-birational}.
Moreover, at least one log canonical places of $(X',\Delta')$ is exceptional over the dlt model $(Y,\Delta_Y)$ of $(X,\Delta)$.
Therefore, $(X,\Delta)$ has at least two log canonical places.
This provides the needed contradiction and the claim follows.
\end{proof}

\begin{definition}\label{def: index one cover}{\em
Let $(X,\Delta)$ be a klt pair with standard coefficients and $x\in X$ a point.
Consider $a$ the smallest positive integer so that $a(K_X+\Delta)\sim 0$ on a neighborhod
of $x\in X$, or equivalently, $a$ is the Cartier index of $K_X+\Delta$ at $x\in X$.
Therefore we have an isomorphism $\mathcal{O}_X(a(K_X+\Delta))\simeq \mathcal{O}_X$, we can choose a 
nowhere zero section 
\[
s\in H^0(X,\mathcal{O}_X(a(K_X+\Delta))),
\]
and consider $\phi \colon X'\rightarrow X$ the corresponding cyclic cover.
The ramification index of $\phi$ at a prime divisor $E$ which maps onto $\Delta_i$ 
is exactly $d_i-1$, where $d_i={\rm coeff}_{\Delta_i}(\Delta)$.
Moreover, $\phi$ is ramified only at the support of $\Delta$.
Therefore, from the pull-back formula for quasi-finite morphisms we know that 
\[
K_{X'}= \phi^*(K_X+\Delta).
\]
We call $\phi$ the {\em index one cover} of the klt pair $(X,\Delta)$ locally at $x\in X$~\cite[Notation 4.1]{Fuj01}.
}
\end{definition}

\subsection{Complements}

In this subsection, we prove that klt singularities with plt blow-ups admits local complements with a unique log canonical place.

\begin{definition}{\em 
Let $(X,\Delta)$ be a log canonical pair and $X\rightarrow Z$ be a contraction of normal quasi-projective varieties.
We say that $\Gamma \geq 0$ is a {\em strong $(\delta,n)$-complement} over $z\in Z$ of $(X,\Delta)$ if the following conditions hold:
\begin{itemize}
\item $(X,\Delta+\Gamma)$ is an $\delta$-log canonical pair, and
\item $n(K_X+\Delta+\Gamma)\sim 0$ over a neighborhood of $z$.
\end{itemize}
In the case that $X\rightarrow Z$ is the identity, then we say that $\Gamma$ is a {\em local} strong $(\delta,n)$-complement around $x\in X$ for $(X,\Delta)$.
On the other hand, if $Z={\rm Spec}(k)$ for some field $k$, then we say that $\Gamma$ is a {\em global} strong $(\delta,n)$-complement for the pair $(X,\Delta)$.}
\end{definition}

\begin{remark}
In this notation, a pair $(X,\Delta)$ is exceptional at $x\in X$ if and only if every local complement at $x\in X$ has a unique log canonical place.
\end{remark}

\begin{lemma}\label{good complement}
Let $(X,\Delta)$ be a $d$-dimensional klt pair with standard coefficients.
Assume that $(X,\Delta)$ admits an $\epsilon$-plt blow-up at $x\in X$ extracting the exceptional divisor $E$.
There exists a natural number $n$, only depending on $d$ and $\epsilon$, 
and a boundary $\Gamma$ on $X$ so that the following conditions hold:
\begin{itemize}
\item $n(K_X+\Delta+\Gamma) \sim 0$ on a neighborhood of $x\in X$, and
\item $K_Y+\Delta_Y+\Gamma_Y +E =\pi^*(K_X+\Delta+\Gamma)$ is an $\epsilon$-plt pair on a neighborhood of $x\in X$.
\end{itemize}
Moreover, we may assume that the boundary divisors $\Gamma$ and $\Delta$ do not share prime components.
\end{lemma}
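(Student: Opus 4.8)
The plan is to run Birkar's theory of complements on the $\epsilon$-plt blow-up $\pi\colon Y\to X$ rather than on $X$ itself, and then push the complement down. First I would recall that, since $(X,\Delta)$ admits an $\epsilon$-plt blow-up at $x$, we have a projective birational morphism $\pi\colon Y\to X$ with exceptional divisor $E$, with $(Y,\Delta_Y+E)$ plt and $-E$ ample over $X$. Restricting to $E$ and using adjunction, $(E,\Delta_E)$ is a log Fano pair with standard (hence DCC, and in fact finite once we also use $\epsilon$-log canonicity) coefficients by~\cite[Theorem 3.34]{HK10}. The morphism $\pi$ is a Mori fibre space type contraction in the sense required by Birkar's boundedness of complements, so by~\cite[Theorem 1.8]{Bir16a} there exists $n$, depending only on $d$ and the coefficient set $\mathcal{S}$ — hence only on $d$ — and a boundary $\Gamma_Y'\ge 0$ on $Y$ such that $n(K_Y+\Delta_Y+E+\Gamma_Y')\sim 0$ over $x$ and $(Y,\Delta_Y+E+\Gamma_Y')$ is log canonical near the fibre over $x$.

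The next step is to arrange the $\epsilon$-plt conclusion and the descent. Since $n(K_Y+\Delta_Y+E+\Gamma_Y')$ is $\qq$-linearly trivial over $x$ and $\pi$ contracts only $E$, I would push $\Gamma_Y'$ forward to a boundary $\Gamma$ on $X$ with $K_Y+\Delta_Y+E+\Gamma_Y'=\pi^*(K_X+\Delta+\Gamma)$, using that $\pi^*$ and strict transform agree on divisors not contracted by $\pi$; then $n(K_X+\Delta+\Gamma)\sim 0$ near $x$ automatically, and $\Gamma_Y'=\Gamma_Y$ is the strict transform of $\Gamma$. To get the $\epsilon$-plt statement, I would invoke the same mechanism as in Proposition~\ref{e-plt blow-up}: after adding the complement we still have $E$ with log discrepancy $0$, and every other exceptional divisor $F$ over $Y$ satisfies $a_F(Y,\Delta_Y+E+\Gamma_Y)\le a_F(Y,\Delta_Y+E)$, so its log discrepancy is either $0$ or $>\epsilon$ exactly as for the original $\epsilon$-plt blow-up — provided the complement does not create new non-klt places off $E$, which holds because $(E,\Delta_E+\Gamma_E)$ being log canonical forces $(Y,\Delta_Y+E+\Gamma_Y)$ plt away from $E$ by inversion of adjunction~\cite[Theorem 5.48]{KM98} (equivalently, the complement restricts to a complement of the log Fano pair $(E,\Delta_E)$, and plt-ness is preserved).

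For the final sentence, I would ensure $\Gamma$ and $\Delta$ share no components by a standard perturbation: if some prime divisor $D$ appears in both, write $\Delta=\sum d_i \Delta_i$ with $d_i\in\mathcal S$, so $d_i=1-1/m_i$ for some $m_i$, and replace $\Gamma$ by a $\qq$-linearly equivalent boundary (near $x$) supported away from $\supp(\Delta)$; since $n(K_X+\Delta+\Gamma)\sim 0$ near $x$ we may move $\Gamma$ within its linear system, choosing a general member which, by Bertini, avoids the finitely many components of $\Delta$ while keeping the pair $\epsilon$-plt (generality of the member only improves singularities). The main obstacle is the bookkeeping in the second step: checking that Birkar's complement on $Y$ really does descend to a \emph{boundary} $\Gamma\ge 0$ on $X$ (i.e.\ that the coefficient of $E$ in $\Delta_Y+E+\Gamma_Y$ is exactly $1$ so nothing negative is forced on $X$) and that the $\epsilon$ in the plt blow-up is genuinely preserved — here one must be careful that the bound $\epsilon$ is the \emph{same} constant, which works because adding an effective $\Gamma_Y$ only decreases log discrepancies and cannot push a log discrepancy that was $>\epsilon$ down to something in $(0,\epsilon]$ without first passing through a value that would violate log canonicity of the complement.
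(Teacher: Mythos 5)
There is a genuine gap at the heart of your second step. The complement you invoke from \cite[Theorem 1.8]{Bir16a} is only a \emph{log canonical} $n$-complement: it produces $\Gamma_Y'$ with $(Y,\Delta_Y+E+\Gamma_Y')$ lc and $n(K_Y+\Delta_Y+E+\Gamma_Y')\sim 0$ over $x$, nothing more. Two things then go wrong. First, such a complement may acquire log canonical places other than $E$, in which case $(Y,\Delta_Y+\Gamma_Y+E)$ is not even plt; your appeal to inversion of adjunction does not repair this, since an lc restriction to $E$ only forces the total pair to be lc near $E$, not plt (for plt you would need the restriction to be klt). Second, and more fundamentally, your closing claim --- that a log discrepancy which was $>\epsilon$ cannot drop into $(0,\epsilon]$ ``without first passing through a value that would violate log canonicity'' --- is false: every value in $(0,\epsilon]$ is perfectly consistent with log canonicity, which only forbids negative log discrepancies. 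Adding an effective $\Gamma_Y$ decreases log discrepancies, so divisors with log discrepancy slightly above $\epsilon$ for $(Y,\Delta_Y+E)$ can land anywhere in $[0,\epsilon]$ for the complemented pair, destroying the $\epsilon$-plt conclusion with the \emph{same} $\epsilon$. This is exactly why the paper does not run complements on $Y\to X$ directly: it first restricts to the Koll\'ar component, applies the \emph{strong $(\epsilon,n)$-complement} theorem \cite[Theorem 1.3.2]{FM18} to the bounded log Fano pair $(E,\Delta_E)$ --- so that $(E,\Delta_E+\Gamma_E)$ stays $\epsilon$-lc --- then lifts $\Gamma_E$ to $Y$ by a Kawamata--Viehweg surjectivity argument on a log resolution, and only then uses inversion of adjunction to convert $\epsilon$-lc on $E$ into $\epsilon$-plt on $Y$. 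The quantitative $\epsilon$ must be built into the complement on $E$ from the start; it cannot be recovered a posteriori from an lc complement on $Y$.

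A secondary problem is your Bertini perturbation for the last sentence. Replacing $\Gamma$ by a general member of its $\qq$-linear system would generically \emph{decrease} the multiplicity of the pullback along $E$, so $E$ would no longer be a log canonical place of $(X,\Delta+\Gamma)$ and the identity $K_Y+\Delta_Y+\Gamma_Y+E=\pi^*(K_X+\Delta+\Gamma)$ (with coefficient exactly $1$ on $E$) would fail; ``generality only improves singularities'' is precisely what you cannot afford at $E$. The paper instead arranges disjointness of supports already at the level of $\Gamma_E$ versus $\Delta_E$ in the construction of the complement on $E$, and lets it propagate through the lift.
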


\begin{proof}
We will construct a strong $(0,n)$-complement for the divisor $-(K_Y+\Delta_Y+E)$ with respect to the morphism $\pi\colon Y \rightarrow X$ around $x\in X$.
This complement will push-forward to a $(0,n)$-complement for $(X,\Delta)$ locally around $x\in X$.
In order to do so, we will do adjunction to $E$, produce a global complement on $E$ and then pull-back to a neighborhood of $E$.

By adjunction, we can write
\[
(K_Y+\Delta_Y+E)|_E = K_E+\Delta_E
\]
is an $\epsilon$-log canonical pair and the coefficients of $\Delta_E$ belong to a set of rational numbers 
satisfying the descending chain condition with rational accumulation points (see, e.g.~\cite[Lemma 5.3]{FM18}).
Hence, by~\cite[Theorem 1.3.2]{FM18}, we can find $n$ only depending on $d$ and $\epsilon$, 
and a global strong $(\epsilon,n)$-complement $\Gamma_E$ for the pair $(E,\Delta_E)$.
From the proof of~\cite[Theorem 1.3]{FM18} we may assume that $\Gamma_E$ do not share prime components with $\Delta_E$.
Without loss of generality we may assume that $n\Delta_Y$ is a Weil divisor.

Let $\pi_Y \colon W \rightarrow Y$ be a log resolution of $(Y,\Delta_Y+E)$, and write
\[
-N_W := \pi_Y^*(K_Y+\Delta_Y+E) = K_W+\Delta_W +E_W,
\]
where $E_W$ is the strict transform of $E$ on $W$.
We define 
\[
L_W:= -nK_W - nE_W - \lfloor (n+1)\Delta_W \rfloor.
\]
Let $P_W$ be the unique integral effective divisor on $W$ so that 
\[
\Lambda_W := (n+1)\Delta_W - \lfloor (n+1)\Delta_W \rfloor + P
\]
is a boundary on $W$ so that $(W,\Lambda_W)$ is plt and $\lfloor \Lambda_W \rfloor=E_W$.
We claim that $P_W$ is an exceptional divisor over $Y$. 
Indeed, if $D$ is a prime divisor on $W$ which is not contracted on $Y$ then we have that
\[
{\rm coeff}_D(\lfloor (n+1)\Delta_W \rfloor) = {\rm coeff}_D( n\Delta_W),
\]
because $n\Delta_Y$ is integral.
Therefore 
\[
{\rm coeff}_D( (n+1)\Delta_W - \lfloor (n+1)\Delta_W \rfloor ) =
{\rm coeff}_D( \Delta_W )=
{\rm coeff}_{\pi_Y(D)}( \Delta_Y) \in (0,1).
\]
By definition, we have that 
\[
L_W+P_W-E_W =K_W+\Lambda_W-E_W + (n+1)N_W,
\]
is the sum of the klt pair $K_W+\Lambda_W-E_W$ and the nef and big divisor $(n+1)N_W$ over a neighborhood of $x\in X$.
Shrinking around $x\in X$ we may assume that $X$ is affine, then $(n+1)N_W$ is nef and big over $X$.
By the relative version of Kawamata-Viehweg theorem~\cite[Theorem 1-2-5]{KMM87}, we have a surjection
\begin{equation}\label{surjectivity}
H^0(L_W+P_W) \rightarrow H^0((L_W+P_W)|_{E_W}).
\end{equation}
We denote by $\Gamma_{E_W}$ the pull-back of $\Gamma_E$ to $E_W$.
Observe that we have 
\[
(L_W+P_W)|_{E_W} \sim G_{E_W} :=  n\Gamma_{E_W}+n\Delta_{E_W}- \lfloor (n+1)\Delta_{E_W} \rfloor +P_{E_W},
\]
where $P_{E_W}:=P_W|_{E_W}$ and $\Delta_{E_W}:=\Delta_W|_{E_W}$.
The divisor $G_{E_W}$ is integral and its coefficients are strictly greater than $-1$, therefore it is indeed effective.
By the surjectivity of~\eqref{surjectivity} there exists $0\leq G_W \sim L_W+P_W$ which restricts to $G_{E_W}$.
We denote by $G_Y$ the push-forward of $G_W$ to $Y$.
By pushing-forward the linear equivalence $L_W+P_W\sim G_W$ to $Y$, and using the fact that $P_W$ is $Y$-exceptional we get that
\[
0\leq G_Y \sim -n(K_Y+\Delta_Y+E).
\]
We define $\Gamma_Y := \frac{G_Y}{n}$.
Observe that by construction we have 
\[
n(K_Y+\Delta_Y+\Gamma_Y+E)\sim 0
\]
on a neighborhood of $x\in X$.
We claim that $\Gamma_Y|_{E}= \Gamma_E$.
Indeed, observe that we can define
\[
n \Gamma_W := G_W -P_W + \lfloor (n+1)\Delta_W \rfloor -n \Delta_W \sim nN_W \sim_{\qq,Y} 0,
\]
and $n\Gamma_W$ pushes-forward to $G_Y$ on $Y$, 
hence $\Gamma_W = \pi_Y^*(\Gamma_Y)$.
On the other hand, we have $n\Gamma_{E_W}= n \Gamma_W|_{E_W}$, 
which means that  $\Gamma_{E_W}=\Gamma_W|_{E_W}$. Thus, we have $\Gamma_Y|_E=\Gamma_E$ as claimed.

Finally, observe that 
\[
(K_Y+\Delta_Y+\Gamma_Y +E)|_E = K_W+\Delta_E+\Gamma_E,
\]
so by inversion of adjunction we conclude that $(Y,\Delta_Y+\Gamma_Y+E)$ is $\epsilon$-plt.
Moreover, since $\Gamma_E$ and $\Delta_E$ do not share prime components,
then $\Delta_Y$ and $\Gamma_Y$ do not share prime components as well.
Define $\Gamma =\pi_*(\Gamma_Y)$, and observe that 
\[
\pi^*(K_X+\Delta+\Gamma)= K_Y+\Delta_Y+\Gamma_Y+E.
\]
Thus, $(X,\Delta+\Gamma)$ is a log canonical pair with a unique log canonical place so that $n(K_X+\Delta+\Gamma) \sim 0$ on a neighborhood of $x\in X$.
Moreover, $\Delta$ and $\Gamma$ do not share prime components.
\end{proof}

\subsection{Examples}\label{sec: examples}

In this subsection, we give two examples to show that the $a$-log canonical 
and $\epsilon$-plt blow-up conditions of Theorem~\ref{finiteness e-plt blow-up} and Theorem~\ref{bounded cartier index e-plt} are indeed necessary.

\begin{example}{\em 
Let $X_n$ be the cone over a rational curve of degree $n$.
Blowing-up the vertex $\pi_n \colon Y_n \rightarrow X_n$ gives a log resolution so that
the pair $(Y_n,E_n)$ is log smooth. Hence, $\pi_n$ is a $1$-plt blow-up.
However, $a_{E_n}(X_n,0)= -1+\frac{2}{n}$, and the Cartier index of $X_n$ at the vertex depends on $n$.}
\end{example}

\begin{example}{\em
By~\cite[Proposition 5.1]{Hay99} we can construct terminal threefold singularities $X_m$ of index $m$ 
and extract two different divisors with discrepancies $1/m$ and $2/m$, respectively.
Hence, if there is any plt blow-up of $X_m$ it is an $\epsilon$-plt blow-up for some $\epsilon\leq 2/m$.}
\end{example}

\section{Proof of the main Theorem}

\begin{proof}[Proof of Theorem~\ref{finiteness e-plt blow-up}]
Let $(X,\Delta)$ be a log pair which admits an $\epsilon$-plt blow-up $\pi \colon Y \rightarrow X$ at $x\in X$.
By assumption $(X,\Delta)$ is $a$-lc at $x\in X$.
In particular, the minimal log discrepancy ${\rm mld}_x(X,\Delta)$ is strictly positive.

Let $\phi \colon X'\rightarrow X$ be the index one cover of the klt pair $(X,\Delta)$ locally at $x$
so that $\phi^*(K_X+\Delta)=K_{X'}$ (see~\cite[Notation 4.1]{Fuj01} or Definition~\ref{def: index one cover}).
We denote by $Y'$ the normalization of the main component of $X'\times_X Y$. 
Hence, we have a commutative diagram
\[
 \xymatrix{
 Y' \ar[r]^-{\phi_Y}\ar[d]_-{\pi'}
  & Y \ar[d]^-{\pi}\\
 X' \ar[r]^-{ \phi } & X
 }
\]
where $\pi'$ is birational and $\phi_Y$ is finite with the same degree as $\phi$.
We can write 
\[
\phi^*(K_X+\Delta)=K_{X'} \quad \text{ and } \quad
\phi_Y^*(K_Y+\Delta_Y+E)= K_{Y'}+E',
\]
where $E'$ is the reduced exceptional divisor contracted by $\pi'$.
We claim that the pair $(Y', E')$ is $\epsilon$-plt.
Indeed, by Lemma~\ref{good complement} we may find an effective divisor $\Gamma$ on $X$ so
that 
\[
\pi^*(K_X+\Delta+\Gamma)= K_Y+\Delta_Y+\Gamma_Y + E
\]
is an $\epsilon$-plt pair, where $\Gamma_Y$ is the strict transform of $\Gamma$ on $Y$.
Moreover, $(X,\Delta+\Gamma)$ has a unique log canonical place which corresponds to $E$,
and the divisors $\Delta$ and $\Gamma$ do not share prime components.
Therefore, by Lemma~\ref{delta prime effective} and Lemma~\ref{finite log canonical}, we conclude that 
\[
K_{X'}+\Gamma_{X'}= \phi^*(K_X+\Delta+\Gamma)
\]
is indeed a pair which has a unique log canonical place and its log discrepancies are either zero or greater than $\epsilon$.
By the commutativy of the diagram, we have that 
\[
{\pi'}^*(K_{X'}+\Gamma_{X'})= K_{Y'}+\Gamma_{Y'}+E'
\]
is an $\epsilon$-plt pair. Hence $(Y',E')$ is $\epsilon$-plt as well.
In particular, $\pi' \colon Y'\rightarrow X'$ is an $\epsilon$-plt blow-up at $x'\in X'$.

Observe that by construction the $\qq$-divisors $K_Y+\Delta_Y+E$ and 
$K_{Y'}+E'$ are both $\epsilon$-plt and anti-ample over $X$ and $X'$ respectively.
We define the following pairs by adjunction:
\[
K_{E'}+ \Delta_{E'} = (K_{Y'}+E')|_{E'}
\text{ and }
K_{E}+\Delta_{E} = ( K_Y+\Delta_Y+E)|_{E}.
\]
By the above considerations, we know that both pairs are anti-ample and $\epsilon$-lc.
By~\cite[Theorem 1.1]{Bir16b}, we know that the algebraic varieties $E$ and $E'$ belong to a bounded family which only depends on $d-1$ and $\epsilon$.
Moreover, the boundary divisors $\Delta_{E}$ and $\Delta_{E'}$ have coefficients that belong to a set with the descending chain condition.
By Lemma~\ref{from bounded to log bounded}, we conclude that the log pairs $(E,\Delta_{E})$ and $(E',\Delta_{E'})$ belong to a log bounded family which only depends on $d-1$, $\epsilon$,
and the derived set of standard coefficients.
Furthermore, by~\cite[Lemma 3.3]{Bir16a}, we know that the coefficients of $\Delta_{E}$ and $\Delta_{E'}$ belong to a set of
hyperstandard coefficients $\mathcal{H}(\mathcal{R})$ corresponding
to a finite set of rational numbers $\mathcal{R}$, which only depends on $\mathcal{S}$.
By the $\epsilon$-log canonical condition of the pairs $(E,\Delta_E)$ and $(E',\Delta_{E'})$
and the fact that the only accumulation point of $\mathcal{H}(\mathcal{R})$ is $1$, we conclude
that $\Delta_{E}$ and $\Delta_{E'}$ have coefficients in the finite set $\mathcal{H}(\mathcal{R})\cap [0,1-\epsilon)$ which only depends on $\epsilon$.
By Lemma~\ref{from lb to slb}, we deduce that the pairs $(E,\Delta_E)$ and $(E',\Delta_{E'})$ belong to a strictly log bounded family.
Denote by $\phi_E$ the restriction of $\phi_Y$ to $E'$ and observe that 
\[
\deg(\phi)=\deg(\phi_Y)=\deg(\phi_E)r_E,
\]
where $r_E$ denotes the ramification index of $\phi_Y$ at the generic point of $E'$.

Now we turn to prove that $\deg(\phi)$ has an upper bound which only depends on $d,a$ and $\epsilon$.
In order to do so, we just need to provide that there is an upper bound for $\phi_E$ and $r_E$.
Observe that we have
\[
\deg(\phi_E)= \frac{\vol\left(  -(K_{E'}+\Delta_{E'}) \right)}{\vol\left(  -(K_{E}+\Delta_{E}) \right)}.
\]
Therefore, by Lemma~\ref{bounded volume}, there is an upper bound for $\deg(\phi_E)$, which only depends on $d-1$ and $\epsilon$.
On the other hand, we have the relation 
\[
a_{E'}(X',0)+1 = r_E(a_E(X,\Delta)+1) \geq r_E a.
\]
By Lemma~\ref{bound of ld for Gorenstein} we know that $a_{E'}(X',0)+1$ has an upper bound which only depends on $d$ and $\epsilon$. We conclude that
$deg(\phi)$ has an upper bound which only depends on $d,a$ and $\epsilon$.

Thus, ${\rm mld}_x(X,\Delta)$ belongs to a discrete set which only depends on $d,a$ and $\epsilon$.
Finally, by Lemma~\ref{finite log canonical} we have that
\[
\deg(\phi) {\rm mld}_x(X,\Delta) \leq {\rm mld}_{x'}(X',0) \leq a_{E'}(X',0)+1,
\]
therefore ${\rm mld}_x(X,\Delta)$ belongs to a finite set which only depends on $d,a$ and $\epsilon$.
\end{proof}

\begin{proof}[Proof of Theorem~\ref{bounded cartier index e-plt}]
It follows from the bound on $\deg(\phi)$ given in the proof of Theorem~\ref{finiteness e-plt blow-up}.
\end{proof}

\begin{proof}[Proof of Corollary~\ref{acc and e-plt blow-ups}]
If there exists a sequence in $\mathcal{M}(d,\mathcal{S})_{0,\epsilon}$ which contradicts the ascending chain condition,
passing to a subsequence we may assume the sequence is strictly increasing.
Therefore, such infinite sequence belongs to $\mathcal{M}(d,\mathcal{S})_{a,\epsilon}$ for some positive real number $a$.
This contradicts Theorem~\ref{finiteness e-plt blow-up}.
\end{proof}

\begin{proof}[Proof of Corollary~\ref{acc exceptional singularities}]
The proof follows from Theorem~\ref{finiteness e-plt blow-up} and Lemma~\ref{exceptional singularities}.
\end{proof}

\begin{bibdiv}
\begin{biblist}

\bib{Ale93}{article}{
   author={Alexeev, Valery},
   title={Two two-dimensional terminations},
   journal={Duke Math. J.},
   volume={69},
   date={1993},
   number={3},
   pages={527--545},
   issn={0012-7094},
   review={\MR{1208810}},
   doi={10.1215/S0012-7094-93-06922-0},
}

\bib{Ale94}{article}{
   author={Alexeev, Valery},
   title={Boundedness and $K^2$ for log surfaces},
   journal={Internat. J. Math.},
   volume={5},
   date={1994},
   number={6},
   pages={779--810},
   issn={0129-167X},
   review={\MR{1298994}},
   doi={10.1142/S0129167X94000395},
}
	
\bib{Amb99}{article}{
   author={Ambro, Florin},
   title={On minimal log discrepancies},
   journal={Math. Res. Lett.},
   volume={6},
   date={1999},
   number={5-6},
   pages={573--580},
   issn={1073-2780},
   review={\MR{1739216}},
   doi={10.4310/MRL.1999.v6.n5.a10},
}

\bib{Amb06}{article}{
   author={Ambro, Florin},
   title={The set of toric minimal log discrepancies},
   journal={Cent. Eur. J. Math.},
   volume={4},
   date={2006},
   number={3},
   pages={358--370},
   issn={1895-1074},
   review={\MR{2233855}},
   doi={10.2478/s11533-006-0013-x},
}

\bib{Amb11}{article}{
   author={Ambro, Florin},
   title={Basic properties of log canonical centers},
   conference={
      title={Classification of algebraic varieties},
   },
   book={
      series={EMS Ser. Congr. Rep.},
      publisher={Eur. Math. Soc., Z\"{u}rich},
   },
   date={2011},
   pages={39--48},
   review={\MR{2779466}},
   doi={10.4171/007-1/2},
}

\bib{Bir16a}{misc}{
  author = {Birkar, Caucher},
  title={Anti-pluricanonical systems on Fano varieties},
  year = {2016},
  note = {https://arxiv.org/abs/1603.05765v3},
}

\bib{Bir16b}{misc}{
  author = {Birkar, Caucher},
  title={Singularities of linear systems and boundedness of Fano varieties},
  year = {2016},
  note = {https://arxiv.org/abs/1609.05543v1},
}

\bib{Bor97}{article}{
   author={Borisov, Alexandr},
   title={Minimal discrepancies of toric singularities},
   journal={Manuscripta Math.},
   volume={92},
   date={1997},
   number={1},
   pages={33--45},
   issn={0025-2611},
   review={\MR{1427666}},
   doi={10.1007/BF02678179},
}
		
\bib{dFEM10}{article}{
   author={de Fernex, Tommaso},
   author={Ein, Lawrence},
   author={Musta\c{t}\u{a}, Mircea},
   title={Shokurov's ACC conjecture for log canonical thresholds on smooth
   varieties},
   journal={Duke Math. J.},
   volume={152},
   date={2010},
   number={1},
   pages={93--114},
   issn={0012-7094},
   review={\MR{2643057}},
   doi={10.1215/00127094-2010-008},
}

\bib{dFKX17}{article}{
   author={de Fernex, Tommaso},
   author={Koll\'{a}r, J\'{a}nos},
   author={Xu, Chenyang},
   title={The dual complex of singularities},
   conference={
      title={Higher dimensional algebraic geometry---in honour of Professor
      Yujiro Kawamata's sixtieth birthday},
   },
   book={
      series={Adv. Stud. Pure Math.},
      volume={74},
      publisher={Math. Soc. Japan, Tokyo},
   },
   date={2017},
   pages={103--129},
   review={\MR{3791210}},
}

\bib{Fuj01}{article}{
   author={Fujino, Osamu},
   title={The indices of log canonical singularities},
   journal={Amer. J. Math.},
   volume={123},
   date={2001},
   number={2},
   pages={229--253},
   issn={0002-9327},
   review={\MR{1828222}},
}

\bib{FM18}{misc}{
  author = {Filipazzi, Stefano},
  author = {Moraga,Joaqu\'in}
  title={Strong $(\delta,n)$-complements for semi-stable morphisms},
  year = {2018},
  note = {https://arxiv.org/abs/1810.01990},
}

\bib{Hac14}{article}{
   author={Hacon, Christopher D.},
   title={On the log canonical inversion of adjunction},
   journal={Proc. Edinb. Math. Soc. (2)},
   volume={57},
   date={2014},
   number={1},
   pages={139--143},
   issn={0013-0915},
   review={\MR{3165017}},
   doi={10.1017/S0013091513000837},
}

\bib{Hay99}{article}{
   author={Hayakawa, Takayuki},
   title={Blowing ups of $3$-dimensional terminal singularities},
   journal={Publ. Res. Inst. Math. Sci.},
   volume={35},
   date={1999},
   number={3},
   pages={515--570},
   issn={0034-5318},
   review={\MR{1710753}},
   doi={10.2977/prims/1195143612},
}

\bib{HK10}{book}{
   author={Hacon, Christopher D.},
   author={Kov\'{a}cs, S\'{a}ndor J.},
   title={Classification of higher dimensional algebraic varieties},
   series={Oberwolfach Seminars},
   volume={41},
   publisher={Birkh\"{a}user Verlag, Basel},
   date={2010},
   pages={x+208},
   isbn={978-3-0346-0289-1},
   review={\MR{2675555}},
   doi={10.1007/978-3-0346-0290-7},
}

\bib{HMX14}{article}{
   author={Hacon, Christopher D.},
   author={McKernan, James},
   author={Xu, Chenyang},
   title={ACC for log canonical thresholds},
   journal={Ann. of Math. (2)},
   volume={180},
   date={2014},
   number={2},
   pages={523--571},
   issn={0003-486X},
   review={\MR{3224718}},
   doi={10.4007/annals.2014.180.2.3},
}

\bib{HLS18}{misc}{
  author = {Han, Jingjun},	
  author = {Liu, Jihao},
  author = {Shokurov, Vyacheslav V.},
  title={Boundedness of Singularities admitting an $\epsilon$-PLT blow-up},
  year = {2018},
  note = {Work in progress},
}

\bib{Ish00}{article}{
   author={Ishii, Shihoko},
   title={The quotients of log-canonical singularities by finite groups},
   conference={
      title={Singularities---Sapporo 1998},
   },
   book={
      series={Adv. Stud. Pure Math.},
      volume={29},
      publisher={Kinokuniya, Tokyo},
   },
   date={2000},
   pages={135--161},
   review={\MR{1819634}},
}
	
\bib{IP01}{article}{
   author={Ishii, Shihoko},
   author={Prokhorov, Yuri},
   title={Hypersurface exceptional singularities},
   journal={Internat. J. Math.},
   volume={12},
   date={2001},
   number={6},
   pages={661--687},
   issn={0129-167X},
   review={\MR{1875648}},
   doi={10.1142/S0129167X0100099X},
}

\bib{Kol11}{misc}{
  author = {Koll\'ar, J\'anos},
  title={New examples of terminal and log canonical singularities},
  year = {2016},
  note = {https://arxiv.org/abs/1107.2864},
}

\bib{Kol13}{book}{
   author={Koll\'{a}r, J\'{a}nos},
   title={Singularities of the minimal model program},
   series={Cambridge Tracts in Mathematics},
   volume={200},
   note={With a collaboration of S\'{a}ndor Kov\'{a}cs},
   publisher={Cambridge University Press, Cambridge},
   date={2013},
   pages={x+370},
   isbn={978-1-107-03534-8},
   review={\MR{3057950}},
   doi={10.1017/CBO9781139547895},
}

\bib{KK13}{article}{
   author={Koll\'{a}r, J\'{a}nos},
   author={Kov\'{a}cs, S\'{a}ndor J.},
   title={Log canonical singularities are Du Bois},
   journal={J. Amer. Math. Soc.},
   volume={23},
   date={2010},
   number={3},
   pages={791--813},
   issn={0894-0347},
   review={\MR{2629988}},
   doi={10.1090/S0894-0347-10-00663-6},
}

\bib{KM98}{book}{
   author={Koll\'{a}r, J\'{a}nos},
   author={Mori, Shigefumi},
   title={Birational geometry of algebraic varieties},
   series={Cambridge Tracts in Mathematics},
   volume={134},
   note={With the collaboration of C. H. Clemens and A. Corti;
   Translated from the 1998 Japanese original},
   publisher={Cambridge University Press, Cambridge},
   date={1998},
   pages={viii+254},
   isbn={0-521-63277-3},
   review={\MR{1658959}},
   doi={10.1017/CBO9780511662560},
}

\bib{KMM87}{article}{
   author={Kawamata, Yujiro},
   author={Matsuda, Katsumi},
   author={Matsuki, Kenji},
   title={Introduction to the minimal model problem},
   conference={
      title={Algebraic geometry, Sendai, 1985},
   },
   book={
      series={Adv. Stud. Pure Math.},
      volume={10},
      publisher={North-Holland, Amsterdam},
   },
   date={1987},
   pages={283--360},
   review={\MR{946243}},
}
		
\bib{KX16}{article}{
   author={Koll\'{a}r, J\'{a}nos},
   author={Xu, Chenyang},
   title={The dual complex of Calabi-Yau pairs},
   journal={Invent. Math.},
   volume={205},
   date={2016},
   number={3},
   pages={527--557},
   issn={0020-9910},
   review={\MR{3539921}},
   doi={10.1007/s00222-015-0640-6},
}

\bib{Li17}{article}{
   author={Li, Chi},
   title={K-semistability is equivariant volume minimization},
   journal={Duke Math. J.},
   volume={166},
   date={2017},
   number={16},
   pages={3147--3218},
   issn={0012-7094},
   review={\MR{3715806}},
   doi={10.1215/00127094-2017-0026},
}

\bib{LX16}{misc}{
  author ={Li, Chi}
  author = {Xu, Chenyang},
  title={Stability of Valuations and Kollár Components},
  year = {2016},
  note = {https://arxiv.org/abs/1604.05398},
}
		
\bib{LX17}{misc}{
  author ={Li, Chi}
  author = {Xu, Chenyang},
  title={Stability of Valuations: Higher Rational Rank},
  year = {2017},
  note = {https://arxiv.org/abs/1707.05561},
}

\bib{MP99}{article}{
   author={Markushevich, D.},
   author={Prokhorov, Yu. G.},
   title={Exceptional quotient singularities},
   journal={Amer. J. Math.},
   volume={121},
   date={1999},
   number={6},
   pages={1179--1189},
   issn={0002-9327},
   review={\MR{1719826}},
}

\bib{PS01}{article}{
   author={Prokhorov, Yu. G.},
   author={Shokurov, V. V.},
   title={The first fundamental theorem on complements: from global to
   local},
   language={Russian, with Russian summary},
   journal={Izv. Ross. Akad. Nauk Ser. Mat.},
   volume={65},
   date={2001},
   number={6},
   pages={99--128},
   issn={1607-0046},
   translation={
      journal={Izv. Math.},
      volume={65},
      date={2001},
      number={6},
      pages={1169--1196},
      issn={1064-5632},
   },
   review={\MR{1892905}},
   doi={10.1070/IM2001v065n06ABEH000366},
}

\bib{PS09}{article}{
   author={Prokhorov, Yu. G.},
   author={Shokurov, V. V.},
   title={Towards the second main theorem on complements},
   journal={J. Algebraic Geom.},
   volume={18},
   date={2009},
   number={1},
   pages={151--199},
   issn={1056-3911},
   review={\MR{2448282}},
   doi={10.1090/S1056-3911-08-00498-0},
}

\bib{Sho92}{article}{
   author={Shokurov, V. V.},
   title={A supplement to: ``Three-dimensional log perestroikas'' [Izv.
   Ross. Akad. Nauk Ser. Mat. {\bf 56} (1992), no. 1, 105--203; MR1162635
   (93j:14012)]},
   language={Russian},
   journal={Izv. Ross. Akad. Nauk Ser. Mat.},
   volume={57},
   date={1993},
   number={6},
   pages={141--175},
   issn={1607-0046},
   translation={
      journal={Russian Acad. Sci. Izv. Math.},
      volume={43},
      date={1994},
      number={3},
      pages={527--558},
      issn={1064-5632},
   },
   review={\MR{1256571}},
   doi={10.1070/IM1994v043n03ABEH001579},
}

\bib{Sho96}{article}{
   author={Shokurov, V. V.},
   title={$3$-fold log models},
   note={Algebraic geometry, 4},
   journal={J. Math. Sci.},
   volume={81},
   date={1996},
   number={3},
   pages={2667--2699},
   issn={1072-3374},
   review={\MR{1420223}},
   doi={10.1007/BF02362335},
}

\bib{Sho00}{article}{
   author={Shokurov, V. V.},
   title={Complements on surfaces},
   note={Algebraic geometry, 10},
   journal={J. Math. Sci. (New York)},
   volume={102},
   date={2000},
   number={2},
   pages={3876--3932},
   issn={1072-3374},
   review={\MR{1794169}},
   doi={10.1007/BF02984106},
}

\bib{Sho04}{article}{
   author={Shokurov, V. V.},
   title={Letters of a bi-rationalist. V. Minimal log discrepancies and
   termination of log flips},
   language={Russian, with Russian summary},
   journal={Tr. Mat. Inst. Steklova},
   volume={246},
   date={2004},
   number={Algebr. Geom. Metody, Svyazi i Prilozh.},
   pages={328--351},
   issn={0371-9685},
   translation={
      journal={Proc. Steklov Inst. Math.},
      date={2004},
      number={3(246)},
      pages={315--336},
      issn={0081-5438},
   },
   review={\MR{2101303}},
}

\bib{Xu14}{article}{
   author={Xu, Chenyang},
   title={Finiteness of algebraic fundamental groups},
   journal={Compos. Math.},
   volume={150},
   date={2014},
   number={3},
   pages={409--414},
   issn={0010-437X},
   review={\MR{3187625}},
   doi={10.1112/S0010437X13007562},
}

\bib{Xu17}{misc}{
  author = {Xu, Chenyang},
  title={Interaction Between Singularity Theory and the Minimal Model Program},
  year = {2017},
  note = {https://arxiv.org/abs/1712.01041},
}

\bib{Zar39}{article}{
   author={Zariski, Oscar},
   title={The reduction of the singularities of an algebraic surface},
   journal={Ann. of Math. (2)},
   volume={40},
   date={1939},
   pages={639--689},
   issn={0003-486X},
   review={\MR{0000159}},
   doi={10.2307/1968949},
}

\end{biblist}
\end{bibdiv}

\end{document}